\documentclass[12pt]{amsart}
\usepackage{amssymb,verbatim,enumerate,ifthen}
\usepackage{esint}
\usepackage[mathscr]{eucal}
\usepackage[utf8]{inputenc}
\usepackage[T1]{fontenc}
\usepackage{pgf,tikz,pgfplots}
\pgfplotsset{compat=1.13}
\usepackage{mathrsfs}
\usetikzlibrary{arrows}

\usepackage{enumerate,ifthen}
\usepackage{enumitem}

\makeatletter
\@namedef{subjclassname@2010}{%
  \textup{2010} Mathematics Subject Classification}
\makeatother

\newtheorem{thm}{Theorem}
\newtheorem{cor}{Corollary}
\newtheorem{prop}{Proposition}
\newtheorem{lem}{Lemma}
\numberwithin{lem}{section}



\newtheorem{exa}{Example}



\numberwithin{equation}{section}


\newcommand\R{\mathbb{R}}

\newcommand\N{\mathbb{N}}
\newcommand\Q{\mathbb{Q}}
\newcommand\Z{\mathbb{Z}}
\newcommand{\QA}[1]{A^{[#1]}}

\newcommand{\Sm}{\mathcal{S}}
\newcommand{\LDer}{\underline{\mathcal{D}}}
\DeclareMathOperator{\CM}{\mathcal{CM}}

\numberwithin{equation}{section}
\def\eq#1{{\rm(\ref{#1})}}
\def\Eq#1#2{\ifthenelse{\equal{#1}{*}}
  {\begin{equation*}\begin{aligned}[]#2\end{aligned}\end{equation*}}
  {\begin{equation}\begin{aligned}[]\label{#1}#2\end{aligned}\end{equation}}}
\newcommand{\calF}{\mathcal{F}}
\newcommand{\UQA}[1]{\mathscr{U}_{#1}}
\newcommand{\LQA}[1]{\mathscr{L}_{#1}}
\title{On a lattice-like property of quasi-arithmetic means}
    \subjclass[2010]{26E60, 26D15, 06B35, 03G10}
\keywords{quasi-arithmetic means, lattice, Arrow-Pratt index, differentiability, comparability}

\author[P. Pasteczka]{Pawe\l{} Pasteczka}
\address{Institute of Mathematics \\ Pedagogical University of Cracow \\ Podchor\k{a}\.zych str. 2, 30-084 Krak\'ow, Poland}
\email{pawel.pasteczka@up.krakow.pl}
\begin{document}

\begin{abstract}
 We will prove that in a family of quasi-arithmetic means sattisfying certain smoothness assumption (embed with a naural pointwise ordering) every finite family has both supremum and infimum, which is also a quasi-arithmetic mean sattisfying the same smoothness assumptions. More precisely, if $f$ and $g$ are $\mathcal{C}^2$ functions with nowhere vanishing first derivative then there exists a function $h$ such that: (i) $A^{[f]} \le A^{[h]}$, (ii) $A^{[g]} \le A^{[h]}$, and (iii) for every continuous strictly monotone function $s \colon I \to \mathbb{R}$
 $$
A^{[f]} \le A^{[s]} \text{ and } A^{[g]} \le A^{[s]} \text{ implies } A^{[h]} \le A^{[s]}
 $$
($A^{[f]}$ stands for a quasi-arithmetic mean generated by a function $f$ and so on).
 Moreover $h\in\mathcal{C}^2$, $h'\ne0$, and it is a solution of the differential equation
$$
\frac{h''}{h'}=\max\Big(\frac{f''}{f'},\,\frac{g''}{g'}\Big).
$$
We also provide some extension to a finite family of means.

Obviously dual statements with inverses inequality sign as well as a multifuntion generalization will be also stated.
\end{abstract}

\maketitle

\section{Introduction}
Quasi-arithmetic means were introduced as a generalization of Power Means in 1920s/30s in a series of nearly simultaneous papers \cite{Kno28,Def31,Kol30,Nag30}.  
For an interval $I$ and a continuous and strictly monotone function $f \colon I \to \R$ (from now $I$ stands for an interval and $\CM$ is a family of continuous, strictly monotone functions on $I$) we define \emph{quasi-arithmetic mean} $\QA{f} \colon \bigcup_{n=1}^\infty I^n \to I$  by 
\Eq{*}{
\QA{f}(v):=f^{-1}\left( \frac{f(v_1)+f(v_2)+\cdots+f(v_n)}{n} \right), \quad (n \in \N,\: v \in I^n)\:.
}
The function $f$ is called a \emph{generator} of quasi-arithmetic mean.

It it well known that for
$I=\R_+$, $\pi_p(x):=x^p$ for $p\ne 0$ and $\pi_0(x):=\ln x$, then mean $\QA{\pi_p}$ coincides with the $p$-th power mean (this fact had been already noticed by Knopp \cite{Kno28}).

There were a number of results related to quasi-arithmetic means. For example one can define  the preorder on $\CM$ as follows: 
\Eq{*}{
f \prec g \iff \QA{f}(v) \le \QA{g}(v) \text{ for all } v \in \bigcup_{n \in \N}I^n.
}
It is well know (see for example \cite{HarLitPol34} ) that $\QA{f}=\QA{g}$ if and only if their generators are affine transformation of each other, i.e. there exist $\alpha,\,\beta \in \R$ such that $g=\alpha f + \beta$. Therefore it is natural to define relation $\sim$ on $\CM$ by
\Eq{*}{
f \sim g \iff \QA{f} = \QA{g}\:.
}

Furthermore it is easy to check that $\prec$ induces a partial order on $\CM /_\sim$\:. This order has a lot of interesting properties (see for example results by Cargo-Shisha \cite{CarShi64,CarShi69} and by the author \cite{Pas1710}).

As we are going to elaborate some lattice properties of quasi-arithmetic means, we need to introduce supremum and infimum of any subset in this family.
First of all, when we have one element only, we can naturally define the set of all functions generating a bigger quasi-arithmetic mean $U_f:=\{ s \in \CM \colon f \prec s\}$. 
Then, for every subfamily $\calF \subset \CM$ we define a function $\UQA{\calF} \colon \bigcup_{n=1}^\infty I^n \to I$ by
\Eq{*}{
\UQA{\calF}(v) := 
\begin{cases}
\inf \big\{\QA{s}(v) \colon s \in \bigcap\limits_{f \in \calF} U_f\big\} & \text{ if }\bigcap\limits_{f \in \calF} U_f \ne \emptyset,\\[5mm]
\max(v) & \text{ otherwise.}
\end{cases}
}
Notice that $\UQA{\calF}$ is a mean, while $U_f$ is a family of functions. Similarly one can define $L_f:=\{ s \in \CM \colon f \succ s\}$ and, for $\calF \subset \CM$, a mean $\LQA{\calF} \colon \bigcup_{n=1}^\infty I^n \to I$ by
\Eq{*}{
\LQA{\calF}(v) := 
\begin{cases}
\sup \big\{\QA{s}(v) \colon s \in \bigcap\limits_{f \in \calF} L_f\big\} & \text{ if }\bigcap\limits_{f \in \calF} L_f \ne \emptyset,\\[5mm]
\min(v) & \text{ otherwise.}
\end{cases}
}

Obviously for every $\calF$ both $\LQA{\calF}$ and $\UQA{\calF}$ are monotone and symmetric. Furthermore whenever they are quasi-arithmetic means, then generatiors of $\LQA \calF$ and $\UQA \calF$ are infimum and supremum of $\calF$, respectively (with respect to the partial ordering $\prec$). Therefore it is very natural to ask about possible sufficient conditions to $\LQA\calF$ and $\UQA{\calF}$ to be a quasi-arithmetic mean and, knowing this, about its generator.

In the present paper we will focus on a family of means generated by $\mathcal{C}^2(I)$ functions having nowhere vanishing first derivative (from now on we will denote a family of all such functions by $\Sm$\,). Our main results states that whenever we have two such functions form $\Sm$ then they have both supremum and infimum. These supremum and infimum are quasi-arithmetic means generated by functions from $\Sm$. Furthermore we present a formulae to calculate them in term of ordinary diffential equations (equalities on Arrow-Pratt indexes).

\subsection{Properties of quasi-arithmetic means}
In this section we will present some selection of known results concerning quasi-arithmetic means. It fact there exists a rich literature in this area (see for example \cite[chap. 4]{Bul03} and references therein) and, as a natural consequence, we will present just these results which will be used later.

First of all we list some known comparability conditions for quasi-arithmetic means.

\begin{prop}
\label{prop:comp}
 Let $f,\,g \in \CM$. Then $f\prec g$ if and only if one of the following (equivalent) conditions are valid
 \begin{enumerate}[label={(\roman*)},series=theoremconditions]
\item \label{CC:gf^-1} $g$ is increasing and $g \circ f^{-1}$ is convex or $g$ is decreasing and $g \circ f^{-1}$ is concave,
\item \label{CC:fg^-1} $f$ is increasing and $f \circ g^{-1}$ is concave or $f$ is decreasing and $f \circ g^{-1}$ is convex.
\end{enumerate}
Moreover if $f$ and $g$ are both differentiable and $f \cdot g' \ne 0$ we obtain equvalent statements
 \begin{enumerate}[resume*=theoremconditions]
  \item\label{CC:g'/f'} $f$ and $g$ are of the same monotonicity and $g'/f'$ is nondecreasing or $f$ and $g$ are of the opposite monotonicity and $g'/f'$ is nonincreasing; 
  \item\label{CC:f'/g'} $f$ and $g$ are of the same monotonicity and $f'/g'$ is nonincreasing or $f$ and $g$ are of the opposite monotonicity and $f'/g'$ is nondecreasing.
  \end{enumerate}
Additionally if $f$ and $g$ are twice differentiable we have 
 \begin{enumerate}[resume*=theoremconditions]
 \item \label{CC:Mikusinski} $\tfrac{f''(x)}{f'(x)} \le \tfrac{g''(x)}{g'(x)}$ for all $x \in I$.
 \end{enumerate}
\end{prop}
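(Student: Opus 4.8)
The plan is to prove \ref{CC:gf^-1} first, as the fundamental equivalence, and then to obtain every other condition as a successive reformulation of it. First I would fix $f,g\in\CM$, set $\phi:=g\circ f^{-1}$, and for a tuple $v\in I^n$ put $y_i:=f(v_i)$; then $g\big(\QA{f}(v)\big)=\phi\big(\tfrac1n\sum_i y_i\big)$, whereas $\tfrac1n\sum_i g(v_i)=\tfrac1n\sum_i\phi(y_i)$. When $g$ is increasing, $\QA{f}(v)\le\QA{g}(v)$ is equivalent to $\phi\big(\tfrac1n\sum_i y_i\big)\le\tfrac1n\sum_i\phi(y_i)$, and when $g$ is decreasing the inequality reverses. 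Since $n$ and the values $y_i\in f(I)$ are arbitrary and $\phi$ is continuous, demanding this for every tuple is exactly convexity (resp.\ concavity) of $\phi$, which is the content of \ref{CC:gf^-1}.

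Next I would obtain \ref{CC:fg^-1} by running the identical computation with the roles of $f$ and $g$ interchanged: splitting instead on the monotonicity of $f$ and setting $\psi:=f\circ g^{-1}=\phi^{-1}$ recasts $f\prec g$ as a Jensen inequality for $\psi$, which forces $\psi$ to be concave when $f$ is increasing and convex when $f$ is decreasing. As \ref{CC:gf^-1} and \ref{CC:fg^-1} are each equivalent to $f\prec g$, they are equivalent to one another (alternatively, this follows from the rule that inverting a monotone map preserves the convexity type when it is decreasing and reverses it when increasing).

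To reach \ref{CC:g'/f'} under the differentiability hypothesis, I would differentiate $\phi=g\circ f^{-1}$ to get $\phi'(t)=g'(x)/f'(x)$ with $x=f^{-1}(t)$, so that $\tfrac{d}{dt}\phi'(t)=\big(g'/f'\big)'(x)\cdot\big(f'(x)\big)^{-1}$. Convexity of $\phi$ then means that $g'/f'$ is nondecreasing when $f'>0$ and nonincreasing when $f'<0$, and dually for concavity. Matching these four sign patterns against the two clauses of \ref{CC:gf^-1} shows that they collapse exactly to the statement of \ref{CC:g'/f'}; condition \ref{CC:f'/g'} follows immediately because $f'/g'=(g'/f')^{-1}$ has constant sign and hence monotonicity opposite to that of $g'/f'$. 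Finally, for \ref{CC:Mikusinski} I would write $\big(g'/f'\big)'=(g''f'-g'f'')/(f')^2$ and $g''/g'-f''/f'=(g''f'-g'f'')/(g'f')$: in the same-monotonicity case $g'f'>0$ makes ``$g'/f'$ nondecreasing'' identical to $f''/f'\le g''/g'$, while in the opposite case $g'f'<0$ flips the sign and turns ``$g'/f'$ nonincreasing'' into the very same inequality, so the single relation \ref{CC:Mikusinski} subsumes both branches of \ref{CC:g'/f'}.

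The main obstacle I anticipate is twofold, and it lies in the first and third steps rather than in any deep analysis. In step one, justifying that equal-weight Jensen over all $n$ and all tuples is equivalent to genuine convexity is precisely where continuity of $\phi$ is essential: it upgrades the midpoint (or rational-weight) inequality to full convexity, and without it the equivalence fails. In step three, the bookkeeping that matches the ``$g$ increasing/decreasing'' dichotomy of \ref{CC:gf^-1} with the ``same/opposite monotonicity'' dichotomy of \ref{CC:g'/f'} across all four sign combinations is the place where sign errors are most likely and must be carried out with care.
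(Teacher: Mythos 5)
The paper offers no proof of this proposition at all: it is presented as a list of \emph{known} comparability conditions (the classical criterion of Hardy--Littlewood--P\'olya for \ref{CC:gf^-1}--\ref{CC:fg^-1}, its differential reformulations, and the Mikusi\'nski condition \ref{CC:Mikusinski}), so there is nothing in the text to compare your argument against. Your reconstruction is the standard one and is essentially correct: reducing $f\prec g$ to an equal-weight Jensen inequality for $\phi=g\circ f^{-1}$, using continuity to upgrade midpoint/rational-weight convexity to full convexity, and then translating convexity of $\phi$ through $\phi'=(g'/f')\circ f^{-1}$ into the monotonicity statements \ref{CC:g'/f'}--\ref{CC:f'/g'} and finally into \ref{CC:Mikusinski}; your four-case bookkeeping and the sign analysis of $f'g'$ are all accurate.

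One small rigor slip deserves attention: in your treatment of \ref{CC:g'/f'} you write $\tfrac{d}{dt}\phi'(t)=\big(g'/f'\big)'(x)\cdot\big(f'(x)\big)^{-1}$, which presupposes that $g'/f'$ is differentiable --- but under the hypotheses of \ref{CC:g'/f'} the functions $f,g$ are only once differentiable, so this derivative need not exist. The fix is immediate and is in fact what your concluding sentence already uses: a differentiable $\phi$ is convex if and only if $\phi'$ is nondecreasing, and since $\phi'=(g'/f')\circ f^{-1}$ with $f^{-1}$ increasing when $f'>0$ and decreasing when $f'<0$, monotonicity of $\phi'$ transfers directly to the stated monotonicity of $g'/f'$ without ever differentiating it. With that step rephrased, your proof is complete; the second differentiation should be reserved for \ref{CC:Mikusinski}, where twice differentiability is actually assumed.
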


Let us stress two notions which are eqivalent to all conditions above: $g \in U_f$ and $f \in L_g$. Formally, these two are not conditions of comparability, and therefore they are not listed among others.

Notice that condition \ref{CC:Mikusinski} is defined for functions in $\Sm$ only. Now let us recall some recent results concerning smoothness properties implied by comparability enclosed in \cite{Pas1710}.

\begin{lem}[\cite{Pas1710}, Theorem 7]
 \label{lem:hnonvanish}
 Let $f,\,g \in \CM$ such that $f\prec g$. 
 Then 
 \begin{itemize}
  \item one-sided derivatives $f'_-$ and $g'_-$  (resp. $f'_+$ and $g'_+$) exist at the same points;
  \item $f'_-$ and $g'_-$ (resp. $f'_+$ and $g'_+$) vanish at the same points.
 \end{itemize}
\end{lem}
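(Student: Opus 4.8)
The plan is to reduce everything to the regularity of the single–variable transition function $\varphi:=g\circ f^{-1}$ supplied by Proposition \ref{prop:comp}. First I would dispose of monotonicity by the invariance $\QA{f}=\QA{-f}$ (the case $\alpha=-1,\ \beta=0$ of the affine–equivalence fact recalled in the introduction): replacing $f$ or $g$ by its negative changes neither the mean nor, up to a sign, the one–sided derivatives, and it preserves both their existence and their vanishing at each point and on each side. Hence I may assume that $f$ and $g$ are increasing, in which case $f\prec g$ is equivalent, by condition \ref{CC:gf^-1}, to $\varphi=g\circ f^{-1}$ being convex; being a composition of strictly increasing homeomorphisms it is moreover strictly increasing. (For the remaining monotonicity patterns $\varphi$ is either convex or concave and strictly monotone, which is all the argument below uses.)

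The key analytic input is the classical fact that a convex (or concave) function admits finite one–sided derivatives at every interior point of its domain. I would then observe that strict monotonicity upgrades this to $\varphi'_-(t)$ and $\varphi'_+(t)$ being \emph{nonzero} at each interior $t$: were, say, $\varphi'_+(t_0)=0$ for an increasing convex $\varphi$, then monotonicity of $t\mapsto\varphi'_\pm(t)$ together with $\varphi'_\pm\ge 0$ would force $\varphi'\equiv 0$ on a left neighbourhood of $t_0$, hence $\varphi$ constant there, contradicting strict monotonicity. Thus at every interior point $t_0=f(x_0)$ the numbers $\varphi'_\pm(t_0)$ lie in $(0,\infty)$.

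Now I would transfer regularity from $\varphi$ to the pair $(f,g)$ through a factorisation of difference quotients. Writing $g=\varphi\circ f$ and letting $x\to x_0^-$ (so that $f(x)\to f(x_0)=:t_0$ monotonically), one has
\[
\frac{g(x)-g(x_0)}{x-x_0}=\frac{\varphi(f(x))-\varphi(t_0)}{f(x)-t_0}\cdot\frac{f(x)-f(x_0)}{x-x_0},
\]
where the first factor converges to $\varphi'_-(t_0)$ (to $\varphi'_+(t_0)$ when $f$ is decreasing), a finite nonzero limit. Consequently the limit of the left–hand side exists (finite or infinite) if and only if the limit of $\dfrac{f(x)-f(x_0)}{x-x_0}$ does, so $f'_-$ and $g'_-$ exist at exactly the same interior points; and since the connecting factor is finite and nonzero, $g'_-(x_0)=0$ precisely when $f'_-(x_0)=0$, which is the vanishing claim. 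The right–hand derivatives follow from the symmetric computation as $x\to x_0^+$.

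The only real obstacle is the bookkeeping at the two ends of the reduction. First, the $\pm$ matching when $f$ is decreasing: a left increment in $x$ produces a right increment in $t=f(x)$, so $g'_-$ is governed by $\varphi'_+$ rather than $\varphi'_-$, and this must be tracked across the four monotonicity cases. Second, the argument is genuinely an \emph{interior} statement, since at an endpoint a strictly increasing convex $\varphi$ may have a vanishing one–sided derivative (for instance $t\mapsto t^2$ at $0$), which is exactly why the connecting factor has to be controlled away from the boundary. Everything remaining is the routine verification that multiplying by a factor with a finite nonzero limit preserves both the existence and the nonexistence of the surviving limit.
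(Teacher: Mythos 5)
Your statement is one the paper does not prove at all: Lemma~\ref{lem:hnonvanish} is imported verbatim from \cite{Pas1710} (Theorem~7), so there is no internal proof to compare against, and your argument has to be judged on its own merits. On those merits it is correct at interior points of $I$, and it is the natural self-contained route: reduce to increasing $f$, $g$ via $\QA{f}=\QA{-f}$; use condition \ref{CC:gf^-1} to see that $\varphi=g\circ f^{-1}$ is convex and strictly increasing; get finiteness of $\varphi'_\pm$ from convexity and positivity from strict monotonicity (your contradiction argument works, or more directly: for interior $t_0$ pick $s<t_0$ in the domain, then $0<\tfrac{\varphi(t_0)-\varphi(s)}{t_0-s}\le\varphi'_-(t_0)\le\varphi'_+(t_0)$); and finally transfer existence and vanishing through the factorization of difference quotients, whose connecting factor is positive and has a finite nonzero limit, so it can be divided out in both directions. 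Your endpoint caveat is not a defect of your proof but of the statement as literally quoted: on $I=[0,1)$ take $f(x)=x$ and $g(x)=x^2$; then $f\prec g$ (this is the arithmetic--quadratic mean inequality), yet $f'_+(0)=1$ while $g'_+(0)=0$, so the vanishing claim genuinely fails at an endpoint. Hence the lemma must be read for interior points (equivalently, for open $I$) --- which is consistent with how the present paper actually uses it (e.g.\ at the interior point $0$ in its second example) --- and under that reading your proof is complete.
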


As a simple but useful conclusion, which was not worded in \cite{Pas1710}, we get the following corollary
\begin{cor}
 \label{cor:hnonvanish}
 Let $f,\,g \in \CM$ be two differentiable, strictly monotone functions such that $f \prec g$. Then
 \Eq{*}{
 \big\{x \in I \mid f'(x)=0\big\}= \big\{x \in I \mid g'(x)=0\big\}\:.
 }
\end{cor}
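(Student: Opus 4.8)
The plan is to derive Corollary~\ref{cor:hnonvanish} directly from Lemma~\ref{lem:hnonvanish}. The corollary hypothesizes that $f$ and $g$ are (two-sidedly) differentiable, so at every point $x \in I$ both one-sided derivatives $f'_-(x), f'_+(x)$ exist and coincide with $f'(x)$, and likewise for $g$; moreover the common value is the ordinary derivative. The strategy is simply to interpret the two bullets of the lemma under this stronger smoothness and read off the equality of the zero sets.

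First I would fix a point $x \in I$ and note that, since $f$ is differentiable there, $f'_-(x)$ exists; by the first bullet of Lemma~\ref{lem:hnonvanish} applied to the pair $f \prec g$, the left derivative $g'_-(x)$ then also exists (existence ``at the same points''). Because $g$ is assumed differentiable throughout $I$, this is automatic, so the genuine content lies in the second bullet: $f'_-$ and $g'_-$ vanish at exactly the same points. For a differentiable function the one-sided derivative equals the two-sided one, so $f'_-(x) = f'(x)$ and $g'_-(x) = g'(x)$. Hence $f'(x) = 0 \iff f'_-(x) = 0 \iff g'_-(x) = 0 \iff g'(x) = 0$, where the middle equivalence is precisely the second bullet of the lemma. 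This holds for every $x \in I$, which gives the set equality claimed.

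Writing this out as a chain of equivalences, for each $x \in I$ we have
\Eq{*}{
f'(x) = 0 \iff f'_-(x) = 0 \iff g'_-(x) = 0 \iff g'(x) = 0,
}
so the two zero sets coincide. The only subtlety I anticipate is a bookkeeping one: Lemma~\ref{lem:hnonvanish} is stated for one-sided derivatives (both the left and the right versions), and one must be careful to invoke the version whose existence is guaranteed by differentiability and then replace it by the ordinary derivative. Since differentiability hands us both one-sided derivatives and their equality with $f'$, either the left- or the right-hand statement of the lemma suffices, and there is no real obstacle here — the corollary is essentially a specialization of the lemma to the differentiable case, which is why the author notes it was not explicitly worded in \cite{Pas1710}.
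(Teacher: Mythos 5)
Your proof is correct and matches the paper's intent exactly: the paper offers no separate argument, presenting the corollary as an immediate consequence of Lemma~\ref{lem:hnonvanish}, and your chain $f'(x)=0 \iff f'_-(x)=0 \iff g'_-(x)=0 \iff g'(x)=0$ is precisely that specialization to differentiable generators.
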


Furthermore in view of \cite[Theorem 12]{Pas1710} and Corollary~\ref{cor:hnonvanish} above we can establish the following lemma. 

\begin{lem}
 \label{lem:diffnonz}
  Let $f,\,g,\,h \in \CM$ such that $f \prec g \prec h$.
  If $f$ and $h$ are both differentiable at some point $x_0 \in I$, then so is $g$.
  
  Moreover all equalities $f'(x_0)=0$, $g'(x_0)=0$, and $h'(x_0)=0$ are pairwise equivalent.
\end{lem}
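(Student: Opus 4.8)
The plan is to reduce everything to one-sided derivatives and then to play the convexity coming from $f\prec g$ against the concavity coming from $g\prec h$. First I would normalise the monotonicity: since $\QA{f}=\QA{-f}$ and $-f$ is an affine image of $f$, replacing any decreasing generator by its negative changes neither the means, nor the relations $f\prec g\prec h$, nor the differentiability of the generators, nor the vanishing of their derivatives at $x_0$. Hence I may assume that $f$, $g$, $h$ are all strictly increasing. Next I would record the existence of the one-sided derivatives of $g$ at $x_0$: applying Lemma~\ref{lem:hnonvanish} to $f\prec g$, the fact that $f'_-(x_0)$ and $f'_+(x_0)$ exist (as $f$ is differentiable at $x_0$) yields that $g'_-(x_0)$ and $g'_+(x_0)$ exist. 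Note that Corollary~\ref{cor:hnonvanish} is not directly applicable here, since it presupposes differentiability of $g$, which is exactly what we are trying to establish; this is why the one-sided Lemma~\ref{lem:hnonvanish} is the right tool. Thus the only remaining point for differentiability is the equality $g'_-(x_0)=g'_+(x_0)$.

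I would then split into two cases according to whether $f'(x_0)=0$. If $f'(x_0)=0$, then $f'_-(x_0)=f'_+(x_0)=0$, and since the one-sided derivatives of $f$ and $g$ vanish at the same points (Lemma~\ref{lem:hnonvanish}), we get $g'_-(x_0)=g'_+(x_0)=0$; in particular $g$ is differentiable at $x_0$ with $g'(x_0)=0$. If $f'(x_0)\ne0$, the same vanishing clause, used first for $f\prec g$ and then for $g\prec h$, forces $g'_-(x_0),g'_+(x_0)\ne0$ and hence $h'(x_0)\ne0$.

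In this second case I would invoke Proposition~\ref{prop:comp}: from $f\prec g$ the map $\phi:=g\circ f^{-1}$ is convex (condition~\ref{CC:gf^-1}), and from $g\prec h$ the map $\psi:=g\circ h^{-1}$ is concave (condition~\ref{CC:fg^-1}), both generators being increasing. Writing $g=\phi\circ f=\psi\circ h$ and using that $f$, $h$ are genuinely differentiable at $x_0$ with positive derivative, the chain rule for one-sided derivatives (legitimate because $f$, $h$ are strictly increasing, so that left limits stay left and right stay right) gives $g'_\pm(x_0)=\phi'_\pm(f(x_0))\,f'(x_0)=\psi'_\pm(h(x_0))\,h'(x_0)$. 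Convexity of $\phi$ yields $g'_-(x_0)\le g'_+(x_0)$, while concavity of $\psi$ yields $g'_-(x_0)\ge g'_+(x_0)$; hence $g'_-(x_0)=g'_+(x_0)$ and $g$ is differentiable at $x_0$.

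Finally, for the pairwise equivalence, I would use that $g$ is now known to be differentiable at $x_0$, so $g'_\pm(x_0)=g'(x_0)$, and apply the vanishing clause of Lemma~\ref{lem:hnonvanish} to $f\prec g$ and to $g\prec h$ separately, obtaining $f'(x_0)=0\iff g'(x_0)=0$ and $g'(x_0)=0\iff h'(x_0)=0$. The main obstacle I anticipate is the middle case: converting the two one-sided convexity/concavity inequalities into an honest two-sided derivative. This hinges on the chain-rule identity for one-sided derivatives and on having secured $f'(x_0),h'(x_0)\ne0$, so that multiplication by these (positive) factors does not reverse the direction of the inequalities.
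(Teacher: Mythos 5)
Your proof is correct, but it takes a genuinely different route from the paper: the paper does not argue this lemma internally at all, it simply invokes the external interval-type theorem \cite[Theorem 12]{Pas1710} together with Corollary~\ref{cor:hnonvanish}, whereas you reconstruct a self-contained proof from ingredients already present in the paper. Your chain of reasoning --- normalizing all generators to be increasing, getting the existence of $g'_\pm(x_0)$ from Lemma~\ref{lem:hnonvanish}, splitting on whether $f'(x_0)=0$ (where the vanishing clause alone finishes the job), and in the nondegenerate case playing the convexity of $\phi:=g\circ f^{-1}$ (condition~\ref{CC:gf^-1} for $f\prec g$) against the concavity of $\psi:=g\circ h^{-1}$ (condition~\ref{CC:fg^-1} for $g\prec h$) through the one-sided chain rule $g'_\pm(x_0)=\phi'_\pm(f(x_0))\,f'(x_0)=\psi'_\pm(h(x_0))\,h'(x_0)$ --- is sound, and the case split is genuinely necessary: without $f'(x_0)\ne 0$ and $h'(x_0)\ne 0$ those identities could degenerate to a product of an infinite one-sided slope with zero, and you correctly secure $h'(x_0)\ne 0$ via the vanishing clause before using it. You are also right that Corollary~\ref{cor:hnonvanish} cannot be used for the existence part, since it presupposes differentiability of both functions. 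What each approach buys: the paper's citation is short and defers the work to \cite{Pas1710}, where the sandwich-differentiability statement is proved once and for all; your argument makes the present paper independent of that reference on this point and is, in essence, a pointwise reconstruction of the cited theorem. The only caveat, shared with the paper, is the boundary case: if $x_0$ is an endpoint of $I$ contained in $I$, differentiability there is one-sided, so your convexity step is vacuous and existence already follows from Lemma~\ref{lem:hnonvanish} alone.
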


At the end of this section let us recall two results concerning convergence of means. First of them was obtained in 1990s by P\'ales \cite{Pal91}. 
\begin{lem}[\cite{Pal91}, Corollary~1]
 \label{lem:Pales}
Let $f$ and $f_n$, $n \in \N$, be continuous, strictly monotone functions defined on $I$. Then $\lim\limits_{n \to \infty} \QA{f_n}=\QA{f}$ pointwise if and only if
\Eq{*}{
\lim_{n \to \infty} \frac{f_n(x)-f_n(z)}{f_n(y)-f_n(z)}=\frac{f(x)-f(z)}{f(y)-f(z)} \qquad \text{for all }x,\,y,\,z \in I \text{ with } y \ne z.
}
\end{lem}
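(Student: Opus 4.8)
The plan is to read the quantity $\frac{f(x)-f(z)}{f(y)-f(z)}$ as the \emph{affine-invariant normalization} of the generator and to exploit that $\QA{f}$ depends on $f$ only up to an affine change (if $g=\alpha f+\beta$ then $\QA{g}=\QA{f}$). I would fix two points $a<b$ in $I$ and set $\tilde f_n(w):=\frac{f_n(w)-f_n(a)}{f_n(b)-f_n(a)}$, and likewise $\tilde f$. Each $\tilde f_n$ is continuous and \emph{increasing} (the normalization forces this regardless of the monotonicity type of $f_n$), with $\tilde f_n(a)=0$, $\tilde f_n(b)=1$, and $\QA{\tilde f_n}=\QA{f_n}$. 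Since the displayed ratio is unchanged when $f_n$ is replaced by $\tilde f_n$, a short computation shows the condition in the statement to be equivalent to the pointwise convergence $\tilde f_n\to\tilde f$ on all of $I$: take $y=b$, $z=a$ for one implication, and use affine invariance of the ratio together with $\tilde f(y)\neq\tilde f(z)$ for the other. Thus the lemma reduces to proving $\QA{f_n}\to\QA{f}$ pointwise $\iff$ $\tilde f_n\to\tilde f$ pointwise on $I$.

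For the implication $\Leftarrow$ I would argue by hand, needing nothing beyond monotonicity. Writing $\QA{f_n}(v)=\tilde f_n^{-1}\big(\tfrac1m\sum_i\tilde f_n(v_i)\big)$, the inner average $s_n$ converges to $s:=\tfrac1m\sum_i\tilde f(v_i)$, and $w:=\tilde f^{-1}(s)$ is an interior point of $I$ unless all $v_i$ coincide. For $\varepsilon>0$ with $w\pm\varepsilon\in I$ one has $\tilde f(w-\varepsilon)<s<\tilde f(w+\varepsilon)$; since $\tilde f_n(w\pm\varepsilon)\to\tilde f(w\pm\varepsilon)$ and $s_n\to s$, the inequalities $\tilde f_n(w-\varepsilon)<s_n<\tilde f_n(w+\varepsilon)$ hold for large $n$, and monotonicity of $\tilde f_n$ sandwiches $\tilde f_n^{-1}(s_n)$ between $w-\varepsilon$ and $w+\varepsilon$. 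Hence $\QA{f_n}(v)\to w=\QA{f}(v)$.

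The implication $\Rightarrow$ is the heart of the matter. Specializing the means to $k$ copies of $b$ and $\ell$ copies of $a$ realizes, for every rational $t=\frac{k}{k+\ell}\in(0,1)$, the value $\tilde f_n^{-1}(t)$ as a genuine value of $\QA{f_n}$; hence $\tilde f_n^{-1}(t)\to\tilde f^{-1}(t)$ for all rational $t\in(0,1)$, and a monotone squeeze (using continuity of $\tilde f^{-1}$) upgrades this to all $t\in(0,1)$. Inverting by the same sandwiching trick yields $\tilde f_n\to\tilde f$ on the open interval $(a,b)$. \textbf{The main obstacle} is then to propagate this convergence to points $w$ outside $[a,b]$, since means are internal and cannot directly detect the behaviour of $\tilde f_n$ beyond the range of their data. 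I would resolve it by a pullback: for $w>b$ choose a rational weight $t_0$ for which $m:=\tilde f^{-1}\big(t_0\tilde f(w)\big)$ lies in $(a,b)$, which is possible because $t_0\mapsto\tilde f^{-1}(t_0\tilde f(w))$ runs continuously from $a$ (as $t_0\to0$) to $w>b$ (as $t_0\to1$). The $t_0$-weighted mean of $w$ and $a$ equals $m_n:=\tilde f_n^{-1}\big(t_0\tilde f_n(w)\big)$ and converges to $m\in(a,b)$; applying $\tilde f_n$ and invoking the already-established convergence on $(a,b)$ (once more through a monotone sandwich around $m$) gives $t_0\tilde f_n(w)=\tilde f_n(m_n)\to\tilde f(m)=t_0\tilde f(w)$, whence $\tilde f_n(w)\to\tilde f(w)$. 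The case $w<a$ is symmetric (pairing $w$ with $b$), which finishes the extension to all of $I$ and hence the proof.
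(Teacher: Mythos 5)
The paper does not prove this lemma at all: it is imported verbatim from P\'ales \cite{Pal91} (Corollary 1) and used as a black box, so there is no internal proof to compare yours against. Judged on its own merits, your argument is correct and complete, and it gives a self-contained elementary proof of the cited result. The normalization $\tilde f_n(w)=\big(f_n(w)-f_n(a)\big)/\big(f_n(b)-f_n(a)\big)$ correctly converts the ratio condition into pointwise convergence $\tilde f_n\to\tilde f$ (both directions of that equivalence are fine, since the ratio is affine-invariant, the denominator is nonzero by strict monotonicity, and the normalized generators are increasing with $\QA{\tilde f_n}=\QA{f_n}$). The $\Leftarrow$ implication is the standard monotone sandwich. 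In the $\Rightarrow$ implication you correctly realize $\tilde f_n^{-1}(t)$, for rational $t\in(0,1)$, as the value of $\QA{f_n}$ on a vector of copies of $a$ and $b$, upgrade to all $t$ by monotonicity plus continuity of $\tilde f^{-1}$, and invert back to get convergence on $(a,b)$. The pullback device for $w\notin[a,b]$ --- choosing a rational weight $t_0$ so that the $t_0$-weighted mean $m$ of $w$ with $a$ (resp.\ $b$) lands in $(a,b)$, then recovering $\tilde f_n(w)$ from $\tilde f_n(m_n)\to\tilde f(m)$ --- is exactly what is needed to overcome the internality of means, and that step is sound: since $m_n\to m\in(a,b)$ and the $\tilde f_n$ are monotone with continuous pointwise limit on $(a,b)$, a sandwich around $m$ gives $\tilde f_n(m_n)\to\tilde f(m)$. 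The only items worth writing out explicitly in a polished version are the trivial cases (all coordinates equal in the $\Leftarrow$ direction, and $w\in\{a,b\}$, where $\tilde f_n(w)=\tilde f(w)$ by construction); neither is a gap.
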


Second one was obtained by the author in 2013 \cite{Pas13} and strenghtened in both 2015 \cite{Pas15b} and 2018 \cite{Pas18a}, for the sake of brevity we will use the 2013's version. 
\begin{lem}[\cite{Pas13}, Corollary 3]
\label{lem:Pas13} Let $f$ and $f_n$ for $n \in \N$ be a functions from $\Sm$.
If $f_n''/f_n' \to f''/f'$ in $L^1(I)$ then $\QA{f_n} \to \QA{f}$ uniformly.
\end{lem}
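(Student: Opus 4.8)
The plan is to recover each generator from its Arrow--Pratt index by integration, to use the affine invariance of quasi-arithmetic means to put all generators in a common normal form, and then to push the $L^1$-convergence of the indices through two integrations and finally through a Lipschitz inverse. Throughout write $\alpha_n:=f_n''/f_n'$ and $\alpha:=f''/f'$, so that the hypothesis reads $\norma{\alpha_n-\alpha}_{L^1(I)}\to0$; in particular $\alpha,\alpha_n\in L^1(I)$.

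First I would fix a base point $x_0\in I$ and, for each $n$, pass to the generator with index $\alpha_n$ normalised by prescribing its value and derivative at $x_0$, namely
\[
\tilde f_n(x):=\int_{x_0}^x \exp\Big(\int_{x_0}^t \alpha_n(s)\,ds\Big)\,dt,
\]
and likewise $\tilde f$ built from $\alpha$. Since $(\ln\abs{f_n'})'=\alpha_n$ determines $f_n'$ up to a multiplicative and $f_n$ up to an additive constant, $\tilde f_n$ is an affine image of $f_n$ and hence $\QA{\tilde f_n}=\QA{f_n}$ (similarly for $f$); thus I may and do replace $f_n,f$ by $\tilde f_n,\tilde f$. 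Because $\alpha\in L^1(I)$, the inner integral $\int_{x_0}^t\alpha$ is bounded, so $\tilde f'(t)=\exp(\int_{x_0}^t\alpha(s)\,ds)$ is bounded and bounded away from $0$; consequently $\tilde f$ is bi-Lipschitz and its inverse is globally Lipschitz with a constant $L\le\exp(\norma{\alpha}_{L^1(I)})$.

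Next comes the technical core. Setting $\eta_n:=\norma{\alpha_n-\alpha}_{L^1(I)}$, the factorisation
\[
\tilde f_n'(t)-\tilde f'(t)=\tilde f'(t)\left(\exp\Big(\int_{x_0}^t(\alpha_n-\alpha)\Big)-1\right)
\]
combined with $\abs{\int_{x_0}^t(\alpha_n-\alpha)}\le\eta_n$ gives the relative bound $\abs{\tilde f_n'(t)-\tilde f'(t)}\le(e^{\eta_n}-1)\,\tilde f'(t)$; integrating from $x_0$ and using $\tilde f(x_0)=0$ yields
\[
\abs{\tilde f_n(x)-\tilde f(x)}\le(e^{\eta_n}-1)\,\abs{\tilde f(x)},\qquad x\in I.
\]
This controls the deviation of the normalised generators by a vanishing multiple of $\abs{\tilde f}$, which is the estimate I would exploit in place of the mere pointwise information that Lemma~\ref{lem:Pales} would supply.

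Finally I would combine this with internality. Fix a compact subinterval $K=[a,b]\subset I$ and a tuple $v\in K^k$; then, $\QA{f_n}$ and $\QA{f}$ being internal, both $w:=\QA{f_n}(v)$ and $w^\ast:=\QA{f}(v)$ lie in $[\min v,\max v]\subseteq K$. From $\tilde f_n(w)=\frac1k\sum_i\tilde f_n(v_i)$ and $\tilde f(w^\ast)=\frac1k\sum_i\tilde f(v_i)$ one obtains
\[
\tilde f(w)-\tilde f(w^\ast)=\big(\tilde f(w)-\tilde f_n(w)\big)+\frac1k\sum_{i=1}^k\big(\tilde f_n(v_i)-\tilde f(v_i)\big),
\]
so, with $M_K:=\max_{[a,b]}\abs{\tilde f}$, the relative bound yields $\abs{\tilde f(w)-\tilde f(w^\ast)}\le2(e^{\eta_n}-1)M_K$, and the Lipschitz inverse then gives $\abs{w-w^\ast}\le2LM_K(e^{\eta_n}-1)$. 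As this tends to $0$ independently of $k$ and of $v\in\bigcup_k K^k$, the means converge uniformly on each compact subinterval (hence on all of $\bigcup_k I^k$ when $I$ is bounded, taking $M_K$ over $I$). I expect the genuine obstacle to be exactly this last upgrade from pointwise to uniform convergence: on an unbounded $I$ one cannot hope for $\tilde f_n\to\tilde f$ uniformly, and it is the multiplicative form of the displayed estimate, together with internality confining every relevant point to $K$, that makes the argument go through.
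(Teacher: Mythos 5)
The statement you were asked to prove is one the paper itself never proves: Lemma~\ref{lem:Pas13} is imported verbatim from \cite{Pas13}, Corollary~3, so there is no internal argument to compare against, and your text is a self-contained reconstruction. As such it is essentially correct, and its mechanism is the same quantitative one that underlies the cited result. The normalization $\tilde f_n(x)=\int_{x_0}^x\exp\big(\int_{x_0}^t\alpha_n\big)\,dt$ is indeed an affine copy of $f_n$, so $\QA{\tilde f_n}=\QA{f_n}$; the identity $\tilde f_n'=\tilde f'\exp\big(\int_{x_0}^t(\alpha_n-\alpha)\big)$ does give $\abs{\tilde f_n-\tilde f}\le(e^{\eta_n}-1)\abs{\tilde f}$; and combining this with internality of the means and the Lipschitz bound $1/\inf\tilde f'\le e^{\norma{\alpha}_{L^1}}$ for $\tilde f^{-1}$ legitimately produces the explicit rate $\abs{\QA{f_n}(v)-\QA{f}(v)}\le 2LM_K(e^{\eta_n}-1)$, uniform in the length of the tuple $v$.

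The one substantive point concerns the scope of ``uniformly''. Your argument gives genuine uniform convergence only when $I$ is bounded (then $M=\sup_I\abs{\tilde f}\le e^{\norma{\alpha}_{L^1}}\,\mathrm{diam}(I)<\infty$), and only locally uniform convergence otherwise. This is not a defect of your proof: on an unbounded interval the lemma as printed is false. Take $I=\R$, $f=\mathrm{id}$, and let $\alpha_n\ge0$ be continuous bumps supported in $[0,1]$ with $\int\alpha_n=\eta_n\to0$, so that $f_n''/f_n'\to f''/f'=0$ in $L^1(\R)$; then for $v=(-T,T)$ one computes
\begin{equation*}
\QA{f_n}(v)=\tfrac12\big(1-e^{-\eta_n}\big)T+O(1)\quad\text{as }T\to\infty,\qquad \QA{f}(v)=0,
\end{equation*}
so $\sup_v\abs{\QA{f_n}(v)-\QA{f}(v)}=\infty$ for every $n$. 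The original Corollary~3 of \cite{Pas13} is formulated on a compact interval; the transcription in the present paper silently drops that hypothesis, and your proof recovers exactly what is true (bounded $I$: uniform; general $I$: locally uniform). For the use the paper makes of the lemma --- the sandwich argument $\lim_n\QA{g_n}\le\UQA{\calF}\le\QA{g}$ in the main theorem --- pointwise convergence already suffices, so your conclusion is ample for that purpose.
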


The rest of the present note is organized as follows -- in the next section we formulate our main result and present some of its applications. Its long proof  (jointly with all relevant technicalities) is shifted to the last section (section~\ref{sec:proofmain}).

\section{\label{sec:mainres} Main result}
It is known (see \ref{CC:Mikusinski}) that (under certain smoothness assumptions) comparability of quasi-arithmetic means are closely related with the so-called Arrow-Pratt index, i.e. the operator $f \mapsto f''/f'$. Moreover it is definitelly the easiest expression among all known conditions, as it reduces comparability of quasi-arithmetic means to comparability of single-variable functions associated with them. Moreover it can be proved (directly from \ref{CC:Mikusinski}) that
\Eq{*}{
f \sim g\iff \frac{f''}{f'}=\frac{g''}{g'}\:.
}
Thus quasi-arithmetic means are generalized rather by their Arrow-Pratt indexes than generators. 

We will establish, in terms of Arrow-Pratt index, the value of $\UQA{}$ for two means (i.e $\UQA{\{f,g\}}$, which will be alternatively denoted as $\UQA{f,g}$). 
\begin{lem}
\label{lem:main}
 For every $f,\,g \in \Sm$ there exists $h \in \Sm$ such that $\UQA{f,g}=\QA{h}$.  
 Moreover 
 \Eq{def:h}{
 \frac{h''}{h'}=\max\Big(\frac{f''}{f'},\,\frac{g''}{g'} \Big).
 }
\end{lem}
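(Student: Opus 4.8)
The plan is to exhibit the claimed $h$ explicitly and then verify that it realises the infimum defining $\UQA{f,g}$. First I would set $\psi:=\max\big(\tfrac{f''}{f'},\tfrac{g''}{g'}\big)$, which is continuous on $I$, and define $h$ by solving $(\log|h'|)'=\psi$; concretely $h(x):=\int_{x_0}^{x}\exp\big(\int_{x_0}^{t}\psi\big)\,dt$ for a fixed $x_0\in I$. Then $h'$ is $C^1$ and nowhere zero, so $h\in\Sm$ and $h''/h'=\psi$ by construction. Since $\psi\ge f''/f'$ and $\psi\ge g''/g'$ pointwise, condition \ref{CC:Mikusinski} of Proposition~\ref{prop:comp} gives $f\prec h$ and $g\prec h$; hence $h\in U_f\cap U_g$, the intersection is nonempty, and taking $s=h$ in the defining infimum yields $\UQA{f,g}(v)\le\QA{h}(v)$ for every $v$. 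It then remains to prove the reverse inequality, which reduces to the single assertion that $h\prec s$ for every $s\in U_f\cap U_g$: this shows $\QA{h}(v)$ is a lower bound for $\{\QA{s}(v):s\in U_f\cap U_g\}$ and so equals the infimum.

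For that assertion I would first normalise signs. Because $\QA{f}=\QA{-f}$ and $f''/f'=(-f)''/(-f)'$, replacing any of $f,g,s$ by its negative changes neither the relevant means nor $\psi$ nor the relation $\prec$; so I may assume $f$, $g$, $h$, $s$ are all increasing. Since $f\prec s$ with $f$ differentiable and $f'\ne0$ everywhere, Lemma~\ref{lem:hnonvanish} supplies one-sided derivatives $s'_+$ of $s$ at every interior point, which are finite (by convexity of $s\circ f^{-1}$) and strictly positive. Translating the comparabilities into right derivatives, $f\prec s$ means $s\circ f^{-1}$ is convex, so $(s\circ f^{-1})'_+$ is nondecreasing; by the chain rule $(s\circ f^{-1})'_+(f(x))=s'_+(x)/f'(x)$, whence $x\mapsto s'_+(x)/f'(x)$ is nondecreasing on $I$, and likewise $x\mapsto s'_+(x)/g'(x)$. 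The goal becomes to show that $x\mapsto s'_+(x)/h'(x)$ is nondecreasing, for then $(s\circ h^{-1})'_+$ is nondecreasing, $s\circ h^{-1}$ is convex, and $h\prec s$ follows from \ref{CC:gf^-1}.

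The heart of the matter is this last monotonicity. Writing the two established facts with logarithms (legitimate since $f,g,h$ are $C^2$ with positive first derivative, so $\log f'(x_2)-\log f'(x_1)=\int_{x_1}^{x_2}\tfrac{f''}{f'}$, and similarly for $g,h$), the hypotheses give, for all $x_1<x_2$,
\[
\log s'_+(x_2)-\log s'_+(x_1)\ \ge\ \max\Big(\int_{x_1}^{x_2}\tfrac{f''}{f'},\ \int_{x_1}^{x_2}\tfrac{g''}{g'}\Big),
\]
whereas the desired conclusion is the same inequality with right-hand side $\int_{x_1}^{x_2}\psi=\log h'(x_2)-\log h'(x_1)$. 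Since the maximum of the two integrals is only at most $\int_{x_1}^{x_2}\psi$, the hypothesis is a priori strictly weaker than what is needed, and I expect bridging this gap to be \emph{the main obstacle}. I would close it by refinement: for a partition $x_1=t_0<\dots<t_N=x_2$, applying the hypothesis on each $[t_i,t_{i+1}]$, choosing the larger integral there, and telescoping gives
\[
\log s'_+(x_2)-\log s'_+(x_1)\ \ge\ \sum_i\max\Big(\int_{t_i}^{t_{i+1}}\tfrac{f''}{f'},\ \int_{t_i}^{t_{i+1}}\tfrac{g''}{g'}\Big).
\]
Uniform continuity of $f''/f'$ and $g''/g'$ on $[x_1,x_2]$ forces the right-hand side to converge to $\int_{x_1}^{x_2}\psi$ as the mesh tends to $0$; letting the mesh shrink yields $\log s'_+(x_2)-\log s'_+(x_1)\ge\int_{x_1}^{x_2}\psi$. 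Hence $s'_+/h'$ is nondecreasing, so $s\circ h^{-1}$ is a continuous function with nondecreasing right derivative, therefore convex, and $h\prec s$ by \ref{CC:gf^-1}.

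Finally I would assemble the two halves: $h\in U_f\cap U_g$ gives $\UQA{f,g}\le\QA{h}$, while $h\prec s$ for all $s\in U_f\cap U_g$ gives $\QA{h}\le\UQA{f,g}$, so $\UQA{f,g}=\QA{h}$ with $h\in\Sm$ solving \eqref{def:h}. The appeal to the refinement step is what makes the coincidence set $\{f''/f'=g''/g'\}$ harmless: a purely interval-wise comparison would lose exactly the excess of $\int\max$ over $\max\int$, and it is the passage to the limit in the partition that recovers it.
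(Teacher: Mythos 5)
Your proof is correct, but it takes a genuinely different route from the paper on the hard half of the argument. The easy half (solving $h''/h'=\max(f''/f',g''/g')$, invoking \ref{CC:Mikusinski} to get $h\in U_f\cap U_g$, and reducing everything to the single claim that $h\prec s$ for all $s\in U_f\cap U_g$) coincides with the paper. For the hard half, the paper proceeds in two stages: first a smoothing lemma (Lemma~\ref{lem:seq}) that replaces an arbitrary $s\in U_f\cap U_g$ by a differentiable $k\in U_f\cap U_g$ with nonvanishing derivative and $k\prec s$, built by iteratively straightening the countably many kink points of $s$ and passing to the limit via P\'ales's convergence criterion (Lemma~\ref{lem:Pales}); second, a mixed-smoothness generalization of Mikusi\'nski's condition (Lemma~\ref{lem:C2C1}), phrased with the lower bilateral derivative $\LDer(k')$, which turns both comparisons $f\prec k$ and $g\prec k$ into \emph{pointwise} inequalities so that the max can be taken pointwise, yielding $h\prec k\prec s$. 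You avoid both auxiliary lemmas entirely: you work directly with the right derivative $s'_+$ (whose existence, finiteness and positivity follow from relative convexity together with Lemma~\ref{lem:hnonvanish}), translate $f\prec s$ and $g\prec s$ into \emph{integrated} inequalities for $\log s'_+$, and then — this is your key insight, correctly identified as the main obstacle — upgrade the max-of-integrals bound to the integral-of-the-max by partitioning, telescoping, and using uniform continuity of $f''/f'$ and $g''/g'$. Your route is more elementary and self-contained: it needs no convergence theorem for means, no Dini-derivative calculus (Lemma~\ref{lem:BilMon}), and none of the delicate limit construction of Lemma~\ref{lem:seq}; the refinement step performs, at the integrated level, exactly the job that the pointwise formulation of Lemma~\ref{lem:C2C1} performs in the paper. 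What the paper's longer route buys is a pair of standalone results of independent interest (the smoothing lemma and the generalized comparability criterion). Two small points you should make explicit to be fully rigorous: the one-sided chain rule $(s\circ f^{-1})'_+(f(x))=s'_+(x)/f'(x)$, and the classical fact that a continuous function whose right derivative exists everywhere, is finite and nondecreasing must be convex — this plays the same foundational role in your argument that Lemma~\ref{lem:BilMon} plays in the paper's.
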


Let us stress that there are two important statements which are bind in this lemma. First of all it states that the value of $\UQA{f,g}$ is a quasi-arithmetic mean. Second, this quasi-arithmetic mean is generalized by a $\mathcal{C}^2$ function wiht nowhere vanishing derivatives. Having this proved, equality \eq{def:h} is an immediate corollary of \ref{CC:Mikusinski}.

Let us now turn to a slight generalization of Lemma~\ref{lem:main} which is in fact a main result in this paper. As Lemma~\ref{lem:main} deals with two means only, there appear a natural question -- what happens when a family $\calF$ is bigger. This generalization is presented in our main theorem.

\begin{thm}
 Let $\calF \subset \Sm$. 
 \begin{enumerate}
 \item If the function $G \colon I \ni x \mapsto \sup_{f \in \calF} \frac{f''(x)}{f'(x)}$ is continuous then $\UQA \calF=\QA{g}$, where $g \in \Sm$ and $g''/g'=G$.
  \item If the function $H \colon I \ni x \mapsto \inf_{f \in \calF} \frac{f''(x)}{f'(x)}$ is continuous then $\LQA \calF=\QA{h}$, where $h \in \Sm$ and $h''/h'=H$.
  \end{enumerate}
 \end{thm}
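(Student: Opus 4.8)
I would treat the two parts as dual and concentrate on part (1); part (2) follows by the symmetric argument, with $\sup$, $\max$ and the sets $U_f$ replaced throughout by $\inf$, $\min$ and $L_f$, the inequality in condition~\ref{CC:Mikusinski} reversed, and the infimum-analogue of Lemma~\ref{lem:main} (index $\min$ instead of $\max$), which holds by the identical reasoning. The plan for part (1) is to exhibit $g$ and prove that it is simultaneously a common upper bound of $\calF$ and the least one. First I would construct $g$ directly from the differential equation: since $G$ is continuous, setting $g'(x):=\exp\!\big(\int_{x_0}^x G\big)$ and integrating once more gives $g\in\mathcal C^2$ with $g'\ne 0$ and $g''/g'=G$, so $g\in\Sm$.

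Next I would verify that $g$ is a common upper bound. For every $f\in\calF$ one has $f''/f'\le G=g''/g'$ pointwise, so condition~\ref{CC:Mikusinski} yields $f\prec g$; hence $g\in\bigcap_{f\in\calF}U_f$. In particular this intersection is nonempty, so $\UQA{\calF}$ is given by the infimum branch of its definition, and since $g$ is an admissible competitor we get $\UQA{\calF}(v)\le\QA{g}(v)$ for every $v$.

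The substance is the reverse inequality, namely that $g\prec s$ for \emph{every} common upper bound $s\in\CM$, which gives $\QA{g}\le\UQA{\calF}$ and hence equality. Here I would fix such an $s$ and a tuple $v$, and let $[a,b]\subset I$ be the compact interval spanned by the entries of $v$. The argument rests on two approximations on $[a,b]$. For a finite subfamily $\calF_0\subset\calF$, iterating Lemma~\ref{lem:main} together with the associativity of suprema in a poset shows that $\sup\calF_0=\QA{g_0}$ with $g_0\in\Sm$ and $g_0''/g_0'=\max_{f\in\calF_0}f''/f'$; since $s$ dominates all of $\calF$, hence $\calF_0$, the least-upper-bound property built into Lemma~\ref{lem:main} forces $g_0\prec s$, so $\QA{g_0}(v)\le\QA{s}(v)$ (note this needs no smoothness of $s$, which neatly sidesteps the regularity issues of Lemma~\ref{lem:hnonvanish}). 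Secondly, using that $G$ is continuous I would approximate it uniformly from below on $[a,b]$ by such finite maxima: for each $x\in[a,b]$ pick $f_x\in\calF$ with $(f_x''/f_x')(x)>G(x)-\varepsilon/2$; by continuity of $f_x''/f_x'$ and of $G$ this persists, with $\varepsilon$, on a neighbourhood of $x$, and a finite subcover of $[a,b]$ produces a finite $\calF_0$ with $G-\varepsilon<\max_{f\in\calF_0}f''/f'\le G$ on $[a,b]$. Taking $\varepsilon_n=1/n$, the generators $g_n$ of $\sup\calF_n$ satisfy $g_n''/g_n'\to G=g''/g'$ uniformly, hence in $L^1([a,b])$. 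Viewing all means as means on $[a,b]$ (the value at $v$ depends only on the restriction of the generator to $[a,b]$, as $\QA{}(v)\in[\min v,\max v]$), Lemma~\ref{lem:Pas13} gives $\QA{g_n}(v)\to\QA{g}(v)$; since $\QA{g_n}(v)\le\QA{s}(v)$ for all $n$, the limit gives $\QA{g}(v)\le\QA{s}(v)$. As $v$ and $s$ were arbitrary, $\QA{g}\le\UQA{\calF}$, and combined with the previous paragraph, $\UQA{\calF}=\QA{g}$.

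I expect the main obstacle to be exactly this reverse inequality, and more precisely the interface between the finite and infinite cases. The finite case is handled cleanly by Lemma~\ref{lem:main}, but transporting it to an arbitrary $\calF$ requires approximating the pointwise supremum $G$ by finite maxima; a pointwise supremum of continuous functions is in general only lower semicontinuous, so the uniform-on-compacta approximation is precisely what the continuity hypothesis on $G$ buys, and it is the step where that hypothesis is indispensable. The second delicate point is the localization to $[a,b]$: uniform convergence of the Arrow--Pratt indices holds only on compacta, so to meet the $L^1$-hypothesis of Lemma~\ref{lem:Pas13} one must reduce the comparison at a fixed tuple $v$ to a compact subinterval before passing to the limit.
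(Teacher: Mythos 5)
Your proposal is correct, and while it leans on the same two pillars as the paper (the two-function Lemma~\ref{lem:main} iterated to finite families, and the convergence Lemma~\ref{lem:Pas13} to pass to the limit), the way you bridge the finite and infinite cases is genuinely different. The paper first invokes the Cargo--Shisha separability result to replace an arbitrary $\calF$ by a countable family $(f_n)$, proves $\UQA{f_1,\dots,f_n}=\QA{g_n}$ by induction, and then sandwiches $\lim_n \QA{g_n} \le \UQA{\calF} \le \QA{g}$, closing the gap by applying Lemma~\ref{lem:Pas13} with $g_n''/g_n' \to G$ in $L^1(I)$. You instead fix a common upper bound $s$ and a tuple $v$, localize to the compact hull $[a,b]$ of $v$, and use the continuity of $G$ plus compactness to manufacture finite subfamilies $\calF_n$ whose maximal Arrow--Pratt indices converge to $G$ \emph{uniformly on} $[a,b]$, then apply Lemma~\ref{lem:Pas13} on $[a,b]$ only. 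This buys you two things the paper glosses over: you never need the separability reduction (and hence never need to check that the supremum of the indices over a countable dense subfamily still equals $G$), and you verify the $L^1$ hypothesis of Lemma~\ref{lem:Pas13} on a set of finite measure, whereas the paper's claim of convergence in $L^1(I)$ is not automatic when $I$ is unbounded or when the monotone convergence is only locally uniform (Dini). Your observation that the least-upper-bound property of Lemma~\ref{lem:main} applies to \emph{any} continuous strictly monotone competitor $s$, with no smoothness needed, is exactly the point that makes this work. One small remark on part~(2): rather than asserting that the infimum analogue of Lemma~\ref{lem:main} ``holds by identical reasoning'' (which would require dualizing the auxiliary lemmas as well), you could obtain it for free from part~(1) via the reflection $\hat f(x):=f(-x)$, which swaps $\prec$ and $\succ$ and turns suprema of indices into infima --- this is precisely how the paper derives its second part.
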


 \begin{proof}
By \cite[Corollary 2.1]{CarShi69} we know that the family of quasi-arithmetic means defined on $I$ is a separable space. Therefore we may assume that $\calF$ is countable, i.e. $\calF=(f_n)_{n \in \N}$.
Here and below we do not claim all $f_n$-s to be different, in particular this proof covers a case where $\calF$ is a finite family.

Define $M_n:=\UQA{f_1,\dots,f_n}$ and a sequence of functions $(g_n)_{n \in \N}$ from $\Sm$ such that 
\Eq{*}{
\frac{g_n''}{g_n'}=\max_{i \in \{1,\dots,n\}} \frac{f_i''}{f_i'} \quad (n \in \N).
}
We will prove by induction that $M_n=\QA{g_n}$ for all $n \in \N$. Indeed, for $n=1$ this equality is trivial. 

Having this we know that $M_{n+1} \ge M_n$ and $M_{n+1} \ge \QA{f_{n+1}}$. It implies $M_{n+1} \ge \UQA{g_n,f_{n+1}}$. Which, in view of Lemma~\ref{lem:main} implies $M_{n+1} \ge \QA{g_{n+1}}$. On the other hand 
\Eq{*}{
\QA{g_{n+1}} \ge \QA{f_k}\quad \text{ for all } k \in \{1,\dots,n+1\},
}
what implies $M_{n+1}=\QA{g_{n+1}}$. In particular $\UQA{\calF} \ge M_{n}$ for all $n \in \N$. Thus $\UQA{\calF} \ge \lim_{n \to \infty} M_n=\lim_{n \to \infty} \QA{g_n}$.

On the other hand, as $\QA{g} \ge \QA{f_n}$ for all $n \in \N$, we have $\UQA{\calF} \le \QA{g}$. Therefore
\Eq{*}{
\lim_{n \to \infty} \QA{g_n} \le \UQA{\calF} \le \QA{g}.
}
Finally in view of Lemma~\ref{lem:Pas13} we get $\lim_{n \to \infty} \QA{g_n}=\QA{g}$ and, consequently, $\UQA{\calF}=\QA{g}$.

To establish a second part we need to use a reflection of quasi-arithmetic means which were considered for example in \cite{PalPas18b}. For a given continuous, monotone function $f \colon I \to \R$ let $\hat f \colon (-I) \to \R$ be defined as $\hat f(x)=f(-x)$. Then, for every vector $v \in \bigcup_{n=1}^\infty I^n$ we have $\QA{f}(v)=-\QA{\hat f}(-v)$. As a trivial consequence, for every $f,\,g \colon I \to \R$, inequality $f \prec g$ is equivalent to $\hat f \succ \hat g$. In particular if we define 
$\widehat \calF:=\{\hat f \mid f \in \calF\}$, then 
\Eq{E:hat1}{
-\LQA{\calF}(-v)=\UQA{\widehat \calF}(v)=\QA{h_0}(v),
}
where by the first part $h_0 \colon I \to \R$ is a solution of a differential equation
\Eq{*}{
\frac{h_0''(x)}{h_0'(x)}=\sup_{f \in \widehat \calF} \frac{f''(x)}{f'(x)}=\sup_{f \in \calF} \frac{\hat f''(x)}{\hat f'(x)},\quad x \in (-I).
}
But $\hat u''(x)/\hat u'(x)=-u''(-x)/u'(-x)$ for all $u \in \Sm$. Additionally, in view of \eq{E:hat1}, $\LQA{\calF}=\QA{\hat h_0}$. Finally, for all $x \in I$,
\Eq{*}{
\frac{\hat h_0''(x)}{\hat h_0'(x)}
=-\frac{h_0''(-x)}{h_0'(-x)}
=-\sup_{f \in \calF} \frac{\hat f''(-x)}{\hat f'(-x)}
=\inf_{f \in \calF} \Big(-\frac{\hat f''(-x)}{\hat f'(-x)}\Big)
=\inf_{f \in \calF} \Big(\frac{f''(x)}{f'(x)}\Big),
}
what concludes the proof.
\end{proof}

Notice that Lemma~\ref{lem:main} implies that the set $\Sm/_\sim$ embedded with a partial ordering $\prec$ admit some type of lattice property. This statement has an alternative (significantly simpler) proof using \ref{CC:Mikusinski}. However, contrary to \ref{CC:Mikusinski}, Lemma~\ref{lem:main} considers the smallest upper bound of \emph{all} continuous and monotone functions, not only these from $\Sm$. 

\section{Examples}
First, let us applied a result in Lemma~\ref{lem:main} in a simple example

\begin{exa}
 Let $I=(-\tfrac{\pi}2,\tfrac\pi{2})$, $f,\,g \colon I \to \R$ be given by $f(x)=\sin(x)$, $g(x)=\tan x$. 
 
 Then $f''(x)/f'(x)=-\tan x$, and $g''(x)/g'(x)=2\tan x$. By Lemma~\ref{lem:main} we get, as $\tan$ is even, that $\UQA{f,g}=\QA{h}$, where
 \Eq{*}{
 h''(x)/h'(x)=
 \begin{cases} 
 f''(x)/f'(x) & \text{ for }x \in (-\tfrac{\pi}2,0] \\
 g''(x)/g'(x) & \text{ for }x \in (0,\tfrac\pi{2}).
 \end{cases}
 }
Therefore $h$ is $\mathcal{C}^2$ function, which is an affine transformation of $\sin$ and $\tan$ on negative and positive elements, respectively. One can easy prove that 
\Eq{*}{
 h(x)=
 \begin{cases} 
 \sin(x) & \text{ for }x \in (-\tfrac{\pi}2,0] \\
 \tan(x) & \text{ for }x \in (0,\tfrac\pi{2}).
 \end{cases}
 }
Similarly $\LQA{f,g}=\QA{k}$, where 
\Eq{*}{
 k(x)=
 \begin{cases} 
 \tan(x) & \text{ for }x \in (-\tfrac{\pi}2,0] \\
 \sin(x) & \text{ for }x \in (0,\tfrac\pi{2}).
 \end{cases}
 }
 Observe that both $h$ and $k$ are $\mathcal{C}^2$ functions (otherwise we should make an affine tranformation in a merging point).
\end{exa}

Second, we show that the assumption that first derivative is nonvanishing is important.

\begin{exa}
 Let $f,\,g \colon (-1,1) \to \R$ be given by $f(x)=x$ and $g(x)=x^3$. Then $\UQA{f,g}=\max$ and $\LQA{f,g}=\min$.

 Indeed, in view of Lemma~\ref{lem:hnonvanish}, for every $h \colon (-1,1) \to \R$ such that $f \prec h$ we get $h'_+(0)$ exists and $h'_+(0) \ne 0$. 
If we apply this lemma again we obtain that $\QA{g}$ and $\QA{h}$ are incomparable. Therefore $U_f \cap U_g=\emptyset$ and, consequently, $\UQA{f,g}=\max$. Prove of the second eqaulity is analogous. 
\end{exa}

\section{\label{sec:proofmain} Proof of Lemma~\ref{lem:main}}
All this section will be devoted to prove the central lemma in the present paper. It is divided into three parts.
Relatively short proof of this statement will be presented in section~\ref{ssec:thm:main}. First, we introduce the notion of bilateral derivative. Having this, we present vary auxiliary result which will be useful in a main part. Sketch of the proof, as well as its division into lemmas is ilustrated on Figure~\ref{fig:1}.

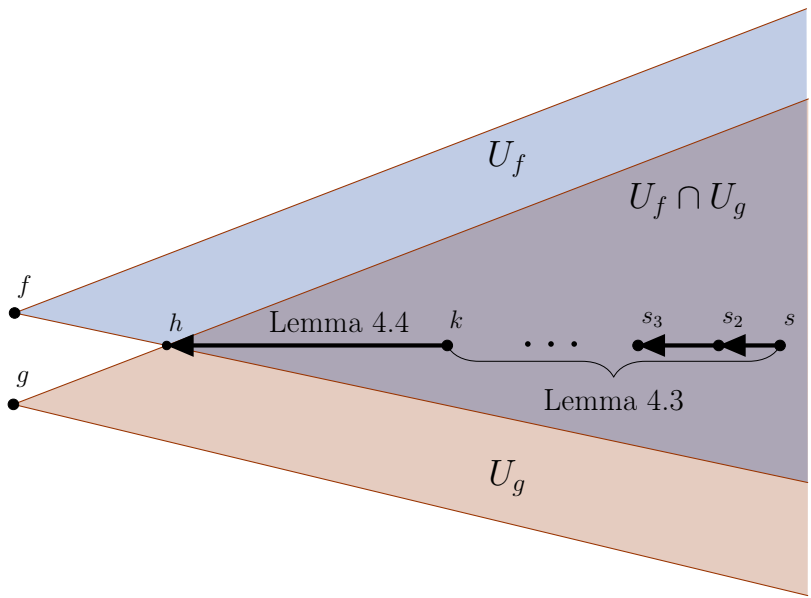
\begin{figure}
 \definecolor{uuuuuu}{rgb}{0,0,0}
\definecolor{zzttqq}{rgb}{0.6,0.2,0}
\definecolor{ttzzqq}{rgb}{0,0.2,0.6}
\definecolor{ududff}{rgb}{0,0,0}
\begin{tikzpicture}[line cap=round,line join=round,>=triangle 45,x=1cm,y=1cm,scale=0.8]
\begin{axis}[
x=1cm,y=1cm,
axis lines=none,
ymajorgrids=false,
xmajorgrids=false,
xmin=-9.1,
xmax=5,
ymin=-8,
ymax=3.5,
]
\clip(-12.25,-11.7) rectangle (12.25,7.14);
\fill[line width=2pt,color=zzttqq,fill=zzttqq,fill opacity=0.25] (5,1) -- (-8.21,-4.08) -- (4.99,-7.26) -- cycle;
\fill[line width=2pt,color=zzttqq,fill=ttzzqq,fill opacity=0.25] (4.97,2.5) -- (-8.19,-2.56) -- (4.99,-5.38) -- cycle;
\draw [line width=0.5pt,color=zzttqq] (5,1)-- (-8.21,-4.08);
\draw [line width=0.5pt,color=zzttqq] (-8.21,-4.08)-- (4.99,-7.26);
\draw [line width=0.5pt,color=zzttqq] (4.97,2.5)-- (-8.19,-2.56);
\draw [line width=0.5pt,color=zzttqq] (-8.19,-2.56)-- (4.99,-5.38);
\draw [->,line width=2pt] (4.53,-3.1) -- (3.51,-3.1);
\draw [->,line width=2pt] (3.51,-3.1) -- (2.17,-3.1);
\draw [->,line width=2pt] (-1,-3.1) -- (-5.663242884807682,-3.1);
\begin{scriptsize}
\draw [fill=ududff] (-8.21,-4.08) circle (2.5pt);
\draw[color=ududff] (-8.05,-3.65) node {$g$};
\draw [fill=ududff] (-8.19,-2.56) circle (2.5pt);
\draw[color=ududff] (-8.03,-2.13) node {$f$};
\draw [fill=uuuuuu] (-5.663242884807682,-3.1006263326891) circle (2pt);
\draw[color=uuuuuu] (-5.51,-2.71) node {$h$};
\draw [fill=ududff] (4.53,-3.1) circle (2.5pt);
\draw[color=ududff] (4.69,-2.67) node {$s$};
\draw [fill=ududff] (3.51,-3.1) circle (2.5pt);
\draw[color=ududff] (3.75,-2.67) node {$s_2$};
\draw [fill=ududff] (2.17,-3.1) circle (02.5pt);
\draw[color=ududff] (2.41,-2.67) node {$s_3$};
\draw [fill=ududff] (-1,-3.1) circle (2.5pt);
\draw[color=ududff] (-0.85,-2.67) node {$k$};
\draw[color=black] (0.8,-3.1) node {\huge $\cdots$};
\draw[color=black] (1.765,-4) node {\large{Lemma~\ref{lem:seq}}};
\draw[color=black] (-2.8,-2.7) node {\large{Lemma~\ref{lem:C2C1}}};
\draw[color=ududff] (0,0) node {\Large{$U_f$}};
\draw[color=ududff] (0,-5.3) node {\Large{$U_g$}};
\draw[color=ududff] (3,-0.7) node {\Large{$U_f \cap U_g$}};
\draw [decorate,decoration={brace,amplitude=15pt},xshift=0pt,yshift=0pt]
(4.53,-3.1) -- (-1,-3.1) node [black,midway,yshift=0.6cm] 
{};
\end{scriptsize}
\end{axis}
\end{tikzpicture}
\caption{\label{fig:1}Proof of Lemma~\ref{lem:main} and auxiliary results. Elements are ordered from left to right by $\prec$.}
\end{figure}

\subsection{Lower bilateral derivative and its basic properties}
In order to deal with a functions which are not necessarily differentiable we need to introduce certain generalization of derivative. For a function $f \colon I \to \R$ let us introduce the \emph{lower (bilateral) derivative}  \cite[p. 52]{Bru78}; \cite[Appendix I]{HabLalRou77} as 
\Eq{*}{
\LDer f(x_0):= \liminf_{x \to x_0}\frac{f(x)-f(x_0)}{x-x_0} \qquad (x_0 \in I).
}

Notice that every function $f$ has a (possibly infinite) lower derivative. 
Observe that for a lower derivative some 
some version of quotient rule remains is valid (see \eq{E:Diniquo} below); proof of this equality is similar to the standard one and therefore omitted.
Analogously chain rule holds whenever at least one (out of two) function is differentiable.
Finally, the following lemma holds.

\begin{lem}[\cite{HabLalRou77}, pp. 345--347] \label{lem:BilMon} Let $f \colon I \to \R$ be a continuous function. Then $f$ is nondecreasing if and only if $\LDer f (x)\ge 0$ for all $x \in I$.
\end{lem}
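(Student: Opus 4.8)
The plan is to prove the two implications separately, the reverse one being the substantive part, and to handle it by a perturbation that reduces the non-strict hypothesis to a strict one where a simple extremum argument applies.

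The forward direction is immediate. If $f$ is nondecreasing, then for every $x \ne x_0$ the numerator $f(x)-f(x_0)$ and the denominator $x-x_0$ carry the same sign, so each difference quotient $\frac{f(x)-f(x_0)}{x-x_0}$ is nonnegative; passing to the $\liminf$ as $x \to x_0$ gives $\LDer f(x_0) \ge 0$.

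For the converse I would assume $\LDer f(x) \ge 0$ for all $x \in I$ and first record the crucial reduction to the strict case. For $\varepsilon > 0$ set $g_\varepsilon(x):=f(x)+\varepsilon x$. The difference quotient of the linear term $\varepsilon x$ equals the constant $\varepsilon$, so every difference quotient of $g_\varepsilon$ exceeds the corresponding quotient of $f$ by exactly $\varepsilon$; hence $\LDer g_\varepsilon = \LDer f + \varepsilon \ge \varepsilon > 0$ everywhere. Thus it suffices to prove the auxiliary claim that a continuous $g$ with $\LDer g(x)>0$ for all $x$ is strictly increasing. Granting this, each $g_\varepsilon$ is strictly increasing, so for any $a<b$ in $I$ we get $f(a)+\varepsilon a < f(b)+\varepsilon b$, i.e. $f(a)-f(b) < \varepsilon(b-a)$; letting $\varepsilon \to 0^+$ yields $f(a)\le f(b)$, so $f$ is nondecreasing.

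I would prove the auxiliary claim by contradiction with an extremum argument. Suppose $\LDer g>0$ everywhere yet $g(a)\ge g(b)$ for some $a<b$. By continuity $g$ attains its minimum over $[a,b]$ at a point $\xi$. If $\xi$ lies in the interior $(a,b)$, then for $x \to \xi^-$ the quotient $\frac{g(x)-g(\xi)}{x-\xi}$ has nonnegative numerator and negative denominator, hence is $\le 0$; since the bilateral $\liminf$ sees these left-hand points, $\LDer g(\xi)\le 0$, a contradiction. The endpoint cases collapse to the same computation: the minimum cannot be attained solely at $a$, because $g(a)\ge g(b)$ forces $g(b)$ to be minimal as well, and when the minimum is attained at $b$ the identical left-hand quotient computation at $b$ gives $\LDer g(b)\le 0$. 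Every case is contradictory, so $g$ is strictly increasing.

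The main obstacle is precisely the gap that the extremum argument only produces $\LDer g \le 0$ at an interior minimum, which is \emph{compatible} with the bare hypothesis $\LDer f \ge 0$ and therefore yields no contradiction on its own. This is exactly why the strict perturbation $g_\varepsilon = f + \varepsilon\,\mathrm{id}$ is indispensable: it upgrades the non-strict hypothesis to the strict inequality where the minimum argument bites, after which the passage $\varepsilon \to 0^+$ restores the non-strict conclusion. The remaining points to verify carefully are the additivity $\LDer(f+\varepsilon x)=\LDer f+\varepsilon$ and the bookkeeping of which endpoints of $[a,b]$ are interior to $I$, so that the bilateral $\liminf$ genuinely incorporates the left-hand difference quotients used above; both are routine. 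This self-contained route reproduces the classical Dini monotonicity criterion cited from \cite{HabLalRou77}.
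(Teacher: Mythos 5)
Your proof is correct. Note first that the paper itself gives no proof of this lemma: it is imported wholesale from \cite{HabLalRou77}, pp.~345--347, so the comparison is with that classical source rather than with an argument in the text. Your route is the standard self-contained one for the Dini-type monotonicity criterion, and all the delicate points are handled properly. The forward direction is indeed immediate from the sign of the difference quotients. In the converse, the key exactness $\LDer(f+\varepsilon\,\mathrm{id})=\LDer f+\varepsilon$ holds precisely because the perturbing quotient is \emph{constant} equal to $\varepsilon$ (in general one only has superadditivity of $\liminf$, so it is worth having said this explicitly, as you do). The extremum argument is also sound: at a minimum $\xi$ of $g$ over $[a,b]$ the left-hand quotients are $\le 0$, and the bilateral $\liminf$ is bounded above by the left-hand $\liminf$, giving $\LDer g(\xi)\le 0$ against strict positivity; your endpoint bookkeeping works because $b$ always has points of $I$ to its left (namely $(a,b)$), and a minimum at $a$ alone is impossible since $g(a)\ge g(b)$ forces $g(b)$ to be minimal too, so one may always choose a minimizer at which the left-hand approach is available. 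You also correctly identify \emph{why} the perturbation is indispensable: the bare hypothesis $\LDer f\ge 0$ is compatible with the conclusion $\LDer g\le 0$ of the extremum argument, so the strict case must be manufactured first and the non-strict conclusion recovered by letting $\varepsilon\to 0^{+}$. What your argument buys over the paper's citation is self-containedness at the cost of half a page; the cited source's treatment runs along essentially the same perturbation-plus-extremum lines, so mathematically the two are equivalent.
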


In the same way one can define the notation of upper derivative, which is however not used in this paper.

\subsection{Auxiliary results} 
In this section we will prove two technical results concerning comparability of means. Both of them are conncected with smoothness properties of quasi-arithmetic means. 

Let us emphasize that Lemma~\ref{lem:seq} is simply implied by Lemma~\ref{lem:main}, however it will be used in its proof. In particular our Lemma~\ref{lem:main} can be seamed as its strenghtening.
Contrary to this Lemma~\ref{lem:convex} and Lemma~\ref{lem:C2C1} are obviously a separated statements, which could be appliciable in a number of different settings.

\begin{lem}
 \label{lem:convex}
 Let $f,\,g \colon I \to \R$ be two continuous, strictly monotone functions, $a,b,c \in I$ with $a<b<c$. If $f|_{(a,b)} \prec g|_{(a,b)}$, $f|_{(b,c)} \prec g|_{(b,c)}$
 and both $f'(b)$ and $g'(b)$ exists and are nonzero then $f|_{(a,c)} \prec g|_{(a,c)}$.
\end{lem}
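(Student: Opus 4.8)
The plan is to translate comparability into convexity of a single transformed function, and to use the differentiability hypotheses only at the one junction point, where they prevent a downward corner.

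First I would normalise the monotonicity. Replacing a generator by its negative is an affine change of generator, which by the Hardy--Littlewood--P\'olya characterisation recalled in the introduction leaves the associated mean, and hence the relation $\prec$, unchanged; moreover $(-f)'(b)=-f'(b)$ is still nonzero, so no hypothesis is lost. Thus I may assume throughout that both $f$ and $g$ are increasing. Set $\phi:=g\circ f^{-1}$, a continuous function on $f((a,c))$. Since $f$ is continuous and increasing we have $f((a,b))=(f(a),f(b))$ and $f((b,c))=(f(b),f(c))$, and the two pieces abut at $y_0:=f(b)$. By the equivalence \ref{CC:gf^-1} of Proposition~\ref{prop:comp}, applied on each subinterval, the hypotheses $f|_{(a,b)}\prec g|_{(a,b)}$ and $f|_{(b,c)}\prec g|_{(b,c)}$ say exactly that $\phi$ is convex on $(f(a),y_0)$ and convex on $(y_0,f(c))$; by continuity it is then convex on $(f(a),y_0]$ and on $[y_0,f(c))$. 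The goal, again by \ref{CC:gf^-1}, is to upgrade this to convexity of $\phi$ on all of $(f(a),f(c))$.

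The role of the derivative hypotheses is precisely to control $\phi$ at $y_0$. Since $f$ is continuous, strictly monotone, and differentiable at $b$ with $f'(b)\ne 0$, its inverse is differentiable at $y_0=f(b)$ with $(f^{-1})'(y_0)=1/f'(b)$; combined with differentiability of $g$ at $b$, the chain rule shows that $\phi$ is differentiable at $y_0$ with $\phi'(y_0)=g'(b)/f'(b)$. In particular the left and right derivatives of $\phi$ at $y_0$ coincide, which is the one fact I will feed into the gluing step.

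I expect the main obstacle to be exactly this gluing: convexity on two abutting intervals does \emph{not} by itself give convexity across the common endpoint, since a concave corner could occur there, and this is what must be excluded. I would argue through the right derivative $\phi'_+$, which for a convex function is nondecreasing. On $(f(a),y_0)$ the secant-slope inequalities give $\phi'_+(y)\le \phi'_-(y_0)=\phi'(y_0)$, while on $(y_0,f(c))$ monotonicity of the right derivative gives $\phi'_+(y)\ge \phi'_+(y_0)=\phi'(y_0)$; the two bounds match at $y_0$ precisely because $\phi$ is differentiable there. Hence $\phi'_+$ is nondecreasing on the whole interval $(f(a),f(c))$, and a continuous function with nondecreasing right derivative is convex, so $\phi$ is convex on $(f(a),f(c))$. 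By \ref{CC:gf^-1} this yields $f|_{(a,c)}\prec g|_{(a,c)}$, as required. The only delicate point is the bookkeeping of one-sided derivatives at $y_0$; everything else is routine, and one could alternatively phrase the monotonicity of $\phi'_+$ through the lower bilateral derivative together with Lemma~\ref{lem:BilMon}, but the secant-slope formulation seems the most transparent.
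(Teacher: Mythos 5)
Your proof is correct and follows essentially the same route as the paper's: normalise both generators to be increasing, use \ref{CC:gf^-1} to translate the hypotheses into convexity of $g\circ f^{-1}$ on the two abutting subintervals, use the derivative hypotheses at $b$ to obtain differentiability of $g\circ f^{-1}$ at $f(b)$, and glue by monotonicity of the one-sided derivatives before applying \ref{CC:gf^-1} in reverse. The paper compresses the gluing into the remark that ``its one-sided derivatives are monotone''; your secant-slope bookkeeping at $y_0=f(b)$ is precisely the detail it leaves to the reader.
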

\begin{proof}
 Assume that both $f$ and $g$ are increasing. Applying \ref{CC:gf^-1} we obtain $g \circ f^{-1}$ is convex on $(f(a),f(b))$ and $(f(b),f(c))$. As $g \circ f^{-1}$ is differentiable at $f(b)$, it can be easily proved that $g \circ f^{-1}$ is convex at $(f(a),f(c))$ (as its one-sided derivatives are monotone). As the property \ref{CC:gf^-1} can be inversed, it implies $f|_{(a,c)} \prec g|_{(a,c)}$.
\end{proof}

\begin{lem}\label{lem:seq}
 Let $f,\,g \in \CM$ be two differentiable functions such that $f' \cdot g' \ne 0$, and $s \in U_f \cap U_g$. Then there exists a differentiable function $k \in U_f \cap U_g$ such that $k'$ is nonvanishing and $k \prec s$.
 
\end{lem}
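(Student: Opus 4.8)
The plan is to reduce to increasing generators and then to recast the three desired relations as monotonicity statements about logarithmic derivatives, which lets me prescribe $k$ through its derivative.

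First I would note that replacing any generator by its negative alters neither its quasi-arithmetic mean nor the relation $\prec$, so after negating whichever of $f,g,s$ is decreasing I may assume all three are increasing; thus $f'>0$, $g'>0$, and $s$ is strictly increasing. Since $f\prec s$ and $f$ is differentiable with $f'\neq0$, Lemma~\ref{lem:hnonvanish} yields that $s'_-$ and $s'_+$ exist everywhere and are strictly positive. By \ref{CC:gf^-1}, $f\prec s$ means $\phi:=s\circ f^{-1}$ is convex; the chain rule gives $s'_+(x)=\phi'_+(f(x))\,f'(x)$, so $s'_+/f'=\phi'_+\circ f$ is nondecreasing. Hence $A:=\log(s'_+/f')$ is nondecreasing, and symmetrically $B:=\log(s'_+/g')$ is nondecreasing.

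For a differentiable $k$ with $k'>0$ the relations I want translate, via \ref{CC:g'/f'} and \ref{CC:gf^-1}, into: $k'/f'$ nondecreasing (this is $f\prec k$), $k'/g'$ nondecreasing ($g\prec k$), and $s'_+/k'$ nondecreasing (this makes $s\circ k^{-1}$ convex, i.e.\ $k\prec s$). Writing $C:=\log(s'_+)-\log k'$, I therefore need a nondecreasing $C$ for which $A-C$ and $B-C$ are also nondecreasing; then $k':=s'_+e^{-C}=f'e^{A-C}$ does the job, because $k'/f'=e^{A-C}$, $k'/g'=e^{B-C}$ and $s'_+/k'=e^{C}$. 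The canonical choice is to take $dC$ to be the infimum $dA\wedge dB$ of the Lebesgue--Stieltjes measures of $A$ and $B$; then $dA-dC\ge0$ and $dB-dC\ge0$, so $C$, $A-C$ and $B-C$ are all nondecreasing, and all three comparabilities follow once $k$ is known to be differentiable.

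The delicate part, and the reason for this particular choice of $C$, is that $s$ is only one-sidedly differentiable while $k$ must be genuinely differentiable with $k'\neq0$. Two observations close this gap. First, the jump of $A$ at any point $x_0$ equals $\log\bigl(\phi'_+(f(x_0))/\phi'_-(f(x_0))\bigr)=\log\bigl(s'_+(x_0)/s'_-(x_0)\bigr)$, which depends only on $s$; the same computation gives the identical jump for $B$. Thus $A$ and $B$ carry equal atoms, $dA\wedge dB$ carries that same atom, and consequently $A-C$ is continuous (its jumps cancel), so that $v:=e^{A-C}$ is continuous and positive. Second, even though $f'$ need not be continuous, $k:=\int_{x_0}^{\cdot}f'v$ is differentiable at every point with $k'=f'v$: splitting $\int_{x_0}^{x}f'(t)v(t)\,dt=v(x_0)\bigl(f(x)-f(x_0)\bigr)+E$ and bounding $|E|\le\bigl(\sup_{[x_0,x]}|v-v(x_0)|\bigr)\int_{x_0}^{x}f'$, the continuity of $v$ at $x_0$ together with $\int_{x_0}^{x}f'=f(x)-f(x_0)$ forces the difference quotient of $k$ to converge to $v(x_0)f'(x_0)$. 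Hence $k$ is differentiable with $k'=f'v>0$, lies in $U_f\cap U_g$, and satisfies $k\prec s$; undoing the initial negations returns the required $k$ for the original $f,g,s$. I expect the main obstacle to be exactly this pair of points — producing continuous $v$ by cancelling the one-sided jumps of $s$, and then extracting honest differentiability of $k$ from the mere differentiability of $f$ — whereas the passage between comparability and monotonicity of derivative ratios is routine.
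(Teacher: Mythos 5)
Your proof is correct, but it takes a genuinely different route from the paper. The paper proceeds by an infinite iteration: it enumerates the countably many points $z_n$ where $s$ fails to be differentiable, repairs them one at a time by an affine rescaling of the left branch (producing a pointwise decreasing sequence $s_n \in U_f \cap U_g$), and then passes to the pointwise limit, invoking P\'ales' convergence criterion (Lemma~\ref{lem:Pales}), the gluing result (Lemma~\ref{lem:convex}), and the smoothness-transfer results (Lemma~\ref{lem:diffnonz}, Corollary~\ref{cor:hnonvanish}) to conclude that the limit $k$ is differentiable with nonvanishing derivative. You instead build $k$ in one shot by prescribing its derivative, $k'=f'e^{A-C}=s'_+e^{-C}$ with $dC=dA\wedge dB$; your key observation --- that the jump of $A=\log(s'_+/f')$ and of $B=\log(s'_+/g')$ at any point both equal $\log\bigl(s'_+/s'_-\bigr)$ there, so subtracting $C$ cancels all atoms and leaves a continuous density --- is correct, and it is exactly what replaces the paper's limiting procedure. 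What your approach buys is an explicit formula for $k$ and no convergence machinery at all; what it costs is heavier classical background, and two facts you use deserve explicit justification: (i) the identity $\int_{x_0}^{x}f'=f(x)-f(x_0)$ is not automatic for a merely differentiable monotone $f$ (monotonicity alone gives only ``$\le$''); it holds because $f'\ge 0$ is locally integrable by Fatou and an everywhere-differentiable function with locally integrable derivative is the integral of its derivative (Rudin, \emph{Real and Complex Analysis}, Theorem 7.21); (ii) the step ``$s'_+/k'$ nondecreasing implies $s\circ k^{-1}$ convex'' needs the one-sided chain rule $(s\circ k^{-1})'_+=\bigl(s'_+/k'\bigr)\circ k^{-1}$ together with the classical criterion that a continuous function whose right derivative exists everywhere and is nondecreasing is convex. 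Both are standard, so these are citation-level gaps rather than mathematical ones. Incidentally, you could simplify slightly by taking $dC$ to be just the common atomic part of $dA$ and $dB$ instead of the full infimum measure; either choice works.
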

Idea of the prove is to correct a function $s$ iteratively, removing the nondiferentiability point one-by-one, and finally pass to the limit.
\begin{proof} 
Take $s \in U_f \cap U_g$ arbitrarily. Assume without loss of generality that $f,\,g,\,s$ are all increasing.

By Lemma~\ref{lem:hnonvanish} we know that $s$ is one-sided differentiable at every point and, moreover, its one-sided derivatives are nowhere vanishing. In particular, as $s$ is convex with respect to $f$, we get that there exists a countable set $Z=(z_n)_{n \in \N}$, such that $s$ has a derivative at every point of $I \setminus Z$. Relative convexity of $s$ also implies that 
\Eq{*}{
s'_-(z_n) \le s'_+(z_n) \quad \text{ for all } z_n \in Z.
}
Set $s_1:=s$ and define, for all $n \ge 1$,
\Eq{*}{
s_{n+1}(x):=\begin{cases}
            \dfrac{(s_n)'_+(z_n)}{(s_n)'_-(z_n)}(s_{n}(x)-s_{n}(z_n))+s_{n}(z_n) & \text{ for }x <z_n,\\[4mm]
            s_n(x) & \text{ for }x \ge z_n.
           \end{cases}
}
Then the mapping $n \mapsto s_n(x)$ is pointwise decreasing for all $x \in I$ and $s_{n_0}$ is differentiable at $I \setminus (z_n)_{n \ge n_0}$, for all $n_0 \in \N$.

Suppose that $s_n \succ f$. By \ref{CC:gf^-1}, as $s_n$ is increasing, $s_{n} \circ f^{-1}$ is convex on $f(I)$. Thus $s_{n+1} \circ f^{-1}$ is convex on $f\big(I\cap (-\infty,x_{n}) \big)$ and (separately) on 
$f\big(I\cap (x_{n},\infty) \big)$. Furthermore
\Eq{*}{
(s_{n+1} \circ f^{-1})'_+(f(z_n))&=(s_n)'_+(z_n)\cdot (f^{-1})'(z_n),\\
(s_{n+1} \circ f^{-1})'_-(f(z_n))&=\frac{(s_n)'_+(z_n)}{(s_n)'_-(z_n)} \cdot (s_n)'_-(z_n)(f^{-1})'(z_n)\\
&=(s_n)'_+(z_n)(f^{-1})'(z_n).
}
Thus $s_{n+1} \circ f^{-1}$ is differentiable at $f(z_n)$ and, in view of Lemma~\ref{lem:convex}, convex. It implies $f \prec s_{n+1}$, similarly $g \prec s_{n+1}$. Thus $s_{n+1} \in U_f \cap U_g$. 

Additionally we have $s_{n+1} \le s_{n}$ for all $n \in \N$.
Let $k \colon I \to \R$ be a pointwise limit of the sequence $(s_n)_{n \in \N}$, i.e. $k:= \lim_{n \to \infty} s_n$. 

As $s_n \circ f^{-1}$ is convex for every $n \in \N$, its pointwise limit $k \circ f^{-1}$ is convex too. In particular $k \circ f^{-1}$ is continuous function defined of $f(I)$. Therefore $k$ being a composition of two continuous functions is continuous too. 

Similarly $f \circ s_n^{-1}$ is concave for every $n \in \N$ and pointwise monotone (as a function of $n$). In particular it has a continuous limit $u$.
Then $k^{-1}=f^{-1} \circ u$, consequently $k$ is invertible. Therefore, as it $k$ also continuous, we obtain that it is strictly monotone. 

It implies that $k$ generates a quasi-arithmetic mean. Moreover, in view of Lemma~\ref{lem:Pales}, we have $\lim_{n \to \infty} \QA{s_n}=\QA{k}$.

Furthermore, by Lemma~\ref{lem:convex}, for all $n \in \N$, the function 
\Eq{*}{
s_{n+1} \circ s_n^{-1}(y)=
\begin{cases}
            y & \text{ for }y \ge s_n(x_n),\\
            \dfrac{(s_n)'_+(x_n)}{(s_n)'_-(x_n)}(y-s_{n}(x_n))+s_{n}(x_n) & \text{ for }y <s_n(x_n)\\
           \end{cases}
}
is concave. In view of \ref{CC:fg^-1}, it imples  $s_{n+1} \prec s_n$ for all $n \in \N$.
 In particular 
 \Eq{*}{
 k=\lim_{n \to \infty} s_n \prec s_{n_0} \text{ for all }n_0 \in \N.
 }
 Moreover as $s_n \in U_f \cap U_g$ for all $n \in \N$, then $k$ being a limit of $s_n$ is an element of $U_f \cap U_g$ too. Furthermore $k \prec s_1=s$. 
 
Notice that in view of Lemma~\ref{lem:diffnonz} for all $n_0 \in \N$, inequality $f \prec k \prec s_{n_0}$ implies that $k$ is differentiable at $I \setminus \{z_n\}_{n \ge n_0}$. As $n_0$ was taken arbitrarily we get that  $k$ is differentiable. Moreover, by Corollary~\ref{cor:hnonvanish}, as $f'$ is nowhere vanishing, so is $k'$.
\end{proof}

Next lemma provides a generalization of Mikusi\'nski comparability condition \ref{CC:Mikusinski} under a mixed smoothness assumption (one function is $\mathcal{C}^1$, second one is $\mathcal{C}^2$). Notice that, as assumptions of generating function are different, one can expect some dual results using the symmetry of quasi-arithmetic means described for example in \cite{PalPas18b}. Indeed, this is a case in this lemma. However, just to keep compactness of its wording it is omitted.

\begin{lem}
\label{lem:C2C1}
 Assume that $f,g \in \CM$. If $f$ is a $\mathcal{C}^2$ function and $k$ is an increasing $\mathcal{C}^1$ function, moreover $f'\cdot k' \ne 0$. Then $\QA{f} \le \QA{k}$ if and only if
\Eq{*}{
\dfrac{f''(x)}{f'(x)} \le \dfrac{\LDer(k')(x)}{k'(x)}\qquad \text{ for all }x \in I.
}\end{lem}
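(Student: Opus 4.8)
The plan is to reduce the assertion to the differentiable comparability criterion \ref{CC:g'/f'} of Proposition~\ref{prop:comp} together with the monotonicity characterisation of the lower derivative in Lemma~\ref{lem:BilMon}. First I would normalise $f$ so that it is increasing. This costs nothing: since $-f=(-1)\cdot f+0$ is an affine transformation of $f$, one has $\QA{-f}=\QA{f}$, and the Arrow--Pratt index is invariant under this sign change because $\tfrac{(-f)''}{(-f)'}=\tfrac{-f''}{-f'}=\tfrac{f''}{f'}$; moreover $-f$ is again $\mathcal{C}^2$ with $(-f)'\ne 0$. Hence, replacing $f$ by $-f$ if necessary, I may assume $f$ increasing, so that $f'>0$; together with the standing hypothesis that $k$ is increasing this gives $k'>0$ on $I$.

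With both generators now of the same monotonicity and both differentiable with $f'\cdot k'\ne 0$, Proposition~\ref{prop:comp}~\ref{CC:g'/f'} applies and yields that $\QA{f}\le\QA{k}$ holds \emph{if and only if} the quotient $k'/f'$ is nondecreasing. Since $k\in\mathcal{C}^1$ and $f\in\mathcal{C}^2$ with $f'$ nonvanishing, the function $k'/f'$ is continuous, so Lemma~\ref{lem:BilMon} is applicable and $k'/f'$ is nondecreasing if and only if $\LDer(k'/f')(x)\ge 0$ for every $x\in I$. Thus the comparability $\QA{f}\le\QA{k}$ is equivalent to the pointwise sign condition $\LDer(k'/f')\ge 0$.

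It then remains to evaluate $\LDer(k'/f')$. Here I would invoke the quotient rule for the lower derivative recorded in \eq{E:Diniquo}: because the denominator $f'$ is genuinely differentiable (with derivative $f''$) and strictly positive, the positive continuous factor $1/\bigl(f'(x)f'(x_0)\bigr)$ and the contribution of $f'$ form honest limits and may be pulled outside the $\liminf$, leaving
$$
\LDer\!\left(\frac{k'}{f'}\right)=\frac{\LDer(k')\,f'-k'\,f''}{(f')^2}.
$$
This is exactly the point where the normalisation matters, and I expect it to be the main technical obstacle: were $f'$ negative, multiplying the difference quotient of $k'$ by $f'$ would convert the $\liminf$ into a $\limsup$ and return the \emph{upper} derivative of $k'$, so the identity above genuinely relies on $f'>0$. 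Granting it, the factor $(f')^2>0$ may be discarded, and dividing the resulting inequality $\LDer(k')\,f'-k'\,f''\ge 0$ by $f'k'>0$ shows that $\LDer(k'/f')\ge 0$ is equivalent to $\tfrac{f''}{f'}\le\tfrac{\LDer(k')}{k'}$ (these equivalences persist in the extended reals when $\LDer(k')=\pm\infty$). Chaining the three equivalences — comparability $\Leftrightarrow$ monotonicity of $k'/f'$ $\Leftrightarrow$ nonnegativity of $\LDer(k'/f')$ $\Leftrightarrow$ the displayed Arrow--Pratt inequality — yields the desired biconditional.
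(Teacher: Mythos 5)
Your proposal is correct and takes essentially the same route as the paper's own proof: normalise $f$ to be increasing, characterise $\QA{f}\le\QA{k}$ via condition \ref{CC:g'/f'} as monotonicity of $k'/f'$, convert that into $\LDer(k'/f')\ge 0$ by Lemma~\ref{lem:BilMon}, and expand with the lower-derivative quotient rule \eq{E:Diniquo}. The only differences are expository: you spell out details the paper leaves implicit, namely why the sign normalisation of $f$ is harmless and why $f'>0$ is exactly what prevents the $\liminf$ from turning into a $\limsup$ in the quotient rule.
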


\begin{proof}
 Assume without loss of generality that $f$ is increasing.

Therefore, as $\QA{f} \le \QA{k}$ we get, by \ref{CC:g'/f'}, $k'/f'$ is nondecreasing. If we repeat the prove of quotient rule to lower derivative (recall that $f'$ is differentiable by the assumption) and apply Lemma~\ref{lem:BilMon} we obtain
\Eq{E:Diniquo}{
0 \le \LDer \Big(\frac{k'}{f'}\Big) (x)=\frac{\LDer(k')(x)f'(x)-f''(x)k'(x)}{f'(x)^2},
}
which simplifies to $
\LDer(k')(x)/k'(x) \ge {f''(x)}/{f'(x)}$.

As all implications were in fact equivalence we obtain that it is both necessary and sufficient condition.
\end{proof}

\subsection{\label{ssec:thm:main} Proof of Lemma~\ref{lem:main}}
First observe that there exists a function $h \in \mathcal{C}^2(I)$ sattisfying \eq{def:h}, as the differential equation $h''/h'=v$ has a solution for every continuous function $v$. Furthermore $h$ is defined up to an affine transformation (cf. \cite{Pas13} for details), in particular all solutions of \eq{def:h} generate the same quasi-arithmetic mean.

It is easy to verify that $h \in U_f \cap U_g$. We will show that in fact $h$ is the smallest element in $U_f \cap U_g$.

Take $s \in U_f \cap U_g$ arbitrarily.
Applying Lemma~\ref{lem:seq}, there exists $k \in U_f \cap U_g$ such that $k \prec s$, $k$ is differentiable, and $k' \ne 0$. Assume that $f$, $g$, and $k$ are increasing.

If we apply Lemma~\ref{lem:C2C1} for the pairs $(f,\,k)$ and $(g,\,k)$ and use the  max function we obtain 
\Eq{*}{
\frac{\LDer(k')(x)}{k'(x)} &\ge \max \left( \frac{f''(x)}{f'(x)},\frac{g''(x)}{g'(x)}\right)=\frac{h''(x)}{h'(x)}\qquad (x \in I)\:.
}
Therefore, applying the oposite implication in Lemma~\ref{lem:C2C1}, we get $h \prec k$. As we also know that $k \prec s$, we get
\Eq{*}{
h \prec s \quad \text{ for all }\quad s \in U_f \cap U_g,
}
which implies that $h$ is the smallest element of $U_f \cap U_g$ (more precisely $[h]_\sim$ is the smallest element in the quotient set $(U_f \cap U_g) /_\sim$ embed with the induced partial ordering). This property is trivially equivalent to the statement of this lemma.

\subsection*{Acknowledgement} I am grateful to professor Roman Badora who suggest me to investigate lattice properties for means.

\def\cprime{$'$} \def\R{\mathbb R} \def\Z{\mathbb Z} \def\Q{\mathbb Q}
  \def\C{\mathbb C}

\end{document}